\crefname{algocf}{Alg.}{Algs.}
\let\oldnl\nl
\newcommand{\nonl}{\renewcommand{\nl}{\let\nl\oldnl}}
\theoremstyle{plain}
\newtheorem{theorem}{Theorem}
\newtheorem{proposition}{Proposition}
\newtheorem{corollary}{Corollary}
\theoremstyle{definition}
\newtheorem{definition}{Definition}
\newtheorem{example}{Example}
\theoremstyle{remark}
\newtheorem{remark}{Remark}
\newcommand{\St}{\mathcal{S}}
\newcommand{\T}{\mathcal{T}}
\newcommand{\floretfun}[1]{E_{#1}}
\newcommand{\edgefun}[1]{E_{{#1}}}
\newcommand{\edgeset}{E}
\newcommand{\baretree}{T=(V,\edgeset)}
\newcommand{\st}{(S,\theta')}
\newcommand{\pt}{(T,\theta)}
\newcommand{\boldtheta}{{\boldsymbol\theta}}
\newcommand{\pithetalambda}{\pi_\theta(\lambda)}
\newcommand{\cT}{c_{\T}}
\newcommand{\summ}[1]{\sum_{\mathclap{#1}}}
\newcommand{\poly}[1]{{\mathrm{poly}(#1)}}
\newcommand{\G}{\mathcal{G}}
\newcommand{\ZZ}{\mathbbm Z}
\newcommand{\ideal}[1]{\langle #1 \rangle}
\DeclareMathOperator{\supp}{support}
\DeclareMathOperator{\Val}{Val}
\title{Discovery of statistical equivalence classes using computer algebra}
\author{Christiane G\"orgen, Anna Bigatti, Eva Riccomagno and Jim Q.~Smith}
\begin{document}
\maketitle
  
\begin{abstract} 
Discrete statistical models supported on labelled event trees can be specified using so-called interpolating polynomials which are generalizations of generating functions.
These admit a nested representation. 
A new algorithm exploits the primary decomposition of monomial ideals associated with an interpolating polynomial to quickly compute all nested representations of that polynomial. It hereby determines an important subclass of all trees representing the same statistical model.
To illustrate this method we analyze the full polynomial
equivalence class of a staged tree representing the best fitting model inferred from a
real-world dataset.\\

\noindent\textbf{Keywords} Graphical Models; Staged Tree Models; Computer Algebra; Ideal Decomposition; Algebraic Statistics.
\end{abstract}

\section{Introduction} 
Families of finite and discrete multivariate models have been extensively studied, including
many different classes of graphical models~\citep{lauritzen,amp:equiv}. Because these families of probability distributions can often be expressed as polynomials -- or collections of vectors of polynomials -- this has spawned a deep study of their algebraic properties~\citep{pistonerw:algstat,sturmfels:bio,oberwolfach}.  These
can then be further exploited using the discipline of computational commutative algebra and computer algebra software such as \texttt{CoCoA}~\citep{cocoa} which has proved to be a powerful though somewhat neglected tool of analysis. 

In this paper, we demonstrate how certain computer algebra techniques -- especially the primary decomposition of ideals -- can be routinely applied to the study of various finite discrete models. Throughout we pay particular attention to an important class of graphical models based on probability trees and called \emph{staged trees} or \emph{chain event graph} models~\citep{ours}. These contain the familiar class of discrete (and context-specific) Bayesian networks as a special case.  
In particular,~\cite{meins} gave a mathematical way of determining the statistical equivalence classes of staged tree models but did not give algorithms to actually find these. Here we use computer algebra in a novel way to systematically find a staged tree representation of a  given family -- if it indeed exists -- and to uncover statistically equivalent staged trees in an elegant, systematic and useful way. This is an extensions of the techniques developed by~\cite{amp:equiv} and others to determine Markov-equivalence classes of Bayesian networks where, instead of algebra,  graph theory was used as a main tool. 

So our methodology supports a new analysis of a very general but fairly recent statistical model class in a novel algebraic way and serves as an illustration of how more generally computer algebra can be a useful tool not only to the study of conventional classes of graphical model but other families of statistical model as well.

\section{Staged trees and  interpolating polynomials}
\label{sect:graphtopoly}

\subsection{Labeled event trees and staged trees}\label{sub:trees}

In this work we will exclusively consider graphs which are
trees, so those which are connected and without cycles. 
We first review the theory of staged trees which represent
interesting and very general discrete models in statistics~\citep{ours}.

\begin{definition}[Labeled event trees]\label{def:tree}
Let $\baretree$ be a finite directed rooted tree with vertex set $V$ and edge set $\edgeset\subseteq V\times V$.
We denote the root vertex {of $T$} by $v_0$.

The tree $T$ is called an \emph{event tree} if every vertex {$v\in V$} has either no, two or more than two emanating edges.   
For $v\in V$, let 
$\floretfun{v} =\{(v,w) \mid w\in V\} \cap \edgeset$
denote the set of the edges emanating from $v$.
The pair $(v,\floretfun{v})$ is called a \emph{floret}.

Let $\Theta$ be a non-empty set of symbol/labels and let a function
$\theta:\edgeset\longrightarrow\Theta$ be such that for any floret
$(v,\floretfun{v})$ the labels in $\theta(\floretfun{v})$ are all distinct.
We call $\theta(\floretfun{v})$ the \emph{floret labels} of $v$
and denote this set by $\theta_v$. 
The pair $\T=\pt$ of graph and function is called a \emph{labeled event tree}. 
When $\theta$ takes values in $(0,1)$ and $ \sum_{e\in \floretfun{v}}
\theta(e)=1$,  $\T$ is called a \emph{probability tree}\footnote{We
  should say more precisely: when the symbols $\theta(e)$ are
  evaluated in $(0,1)$ for all $e\in E$.
   }. 

For $v\in V$, the \emph{labeled subtree rooted in $v$} is
   $\T_v = (T', \theta')$, where $T'$ is the largest
  subtree of $T$ rooted in $v$, and $\theta'$ is 
   the restriction of $\theta$ to the edges in $T'$.
\end{definition}

For any leaf $v\in V$, so for any vertex with no emanating edges, we trivially have that $\edgefun{v}=\emptyset$, and hence $\theta_v=\emptyset$. 

labeled event trees are well-known objects in probability theory and decision theory where they are used to depict discrete unfoldings of events. The labels on edges of a probability tree then correspond to transition probabilities from one vertex to the next and all edge probabilities belonging to the same floret sum to unity. See~\cite{shafer:causalconj} for the use of probability trees in probability theory and causal inference, and see for instance~\cite{salmeron} for how such a tree representation can be used in computational statistics.

In this paper, we generally do not require the labels on a labeled event tree to be probabilities.

\begin{definition}[Staged trees]\label{def:treestaged}
A labeled event tree $\T=\pt$, with $\baretree$, is called a \emph{staged tree} if
for every pair of vertices $v, w\in V$ 
their floret labels are either equal or disjoint,
$\theta_v=\theta_w$ or $\theta_v \cap \theta_w=\emptyset$.
A \emph{stage} is a set of vertices with the same floret labels.
\end{definition} 

In illustrations of staged trees, all vertices in the same stage are usually assigned a common \emph{color}: compare \cref{fig:labeledevent}. Staged trees were first defined as an intermediate step to building \emph{chain event graphs} as
graphical representations for certain discrete statistical models~\cite{smithanderson:ceg}. Every chain event graph is
uniquely associated to a staged tree and vice versa. In this way, the
graphical redundancy of staged trees can be avoided, and elegant conjugate analyzes can be applied to staged tree models
\citep{thwaitessmithcowell:propagation,freeman:MAP,lorna:BNceg,rodrigojim}. In particular, every discrete and context-specific Bayesian network can alternatively be represented by a staged tree where stages indicate equalities of conditional probability vectors. We give examples of this later in the text.

For the development in this paper it is important to observe that staged trees with labels evaluated as probabilities are always also probability trees. This is however not the case for all labeled event trees because sum-to-$1$ conditions imposed on florets can be contradictory. See also Examples~\ref{bsp:labeledevent} and~\ref{bsp:skeleton}  below.

\begin{example}[Saturated trees]\label{bsp:saturated} 
A \emph{saturated} tree is a labeled event tree where all edges have {distinct}
labels. So this is a staged tree where all floret labels are
disjoint, or alternatively with every stage containing exactly one vertex. In the
development below, saturated trees are graphical representations of
saturated statistical models.
\end{example}

\begin{example}\label{bsp:labeledevent}
\Cref{fig:notstaged_staged} shows a staged tree where all blue-coloured
vertices are in the same stage. 
\Cref{fig:skeleton} depicts a staged tree where the two green vertices are in the same stage.
\Cref{fig:notstaged} show a labeled event tree which is not staged because 
the floret labels of the two black vertices are  neither equal nor disjoint. 
\end{example}

\begin{figure}
\centering
\subfloat[A labeled event tree which is staged.]{
\centering
\includegraphics[scale=1]{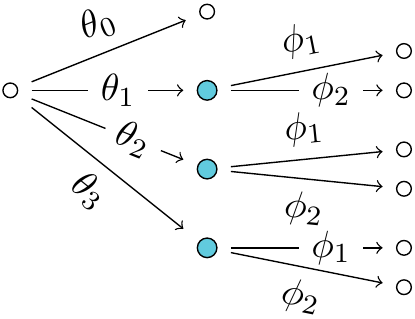}
\label{fig:notstaged_staged}}
~~
\subfloat[Another labeled event tree which is staged.]{
\centering
\raisebox{11pt}{\includegraphics[scale=1]{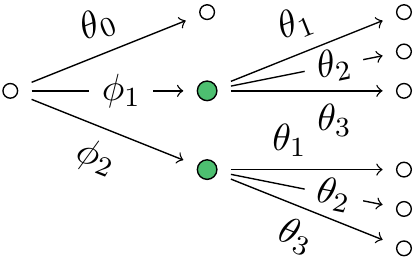}}
\label{fig:skeleton}}
~~
\subfloat[A labeled event tree which is not staged.]{
\centering
\includegraphics[scale=1]{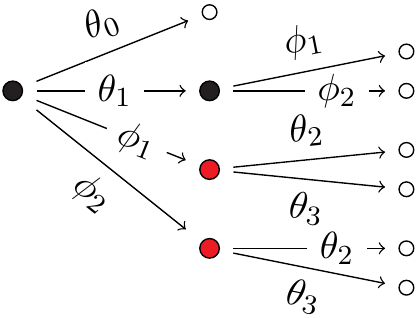}
\label{fig:notstaged}}
\caption{Three illustrations of labeled event trees, analyzed in Examples~\ref{bsp:labeledevent}, \ref{example1poly2factor2trees} and~\ref{bsp:skeleton}.}\label{fig:labeledevent}
\end{figure}

\subsection{Network polynomials and interpolating polynomials}
\label{sec:network-interpol}

We next define a polynomial associated to a labeled event tree which is the key tool used in this paper: see also~\cite{meins}.

\begin{definition}[Network and interpolating polynomials]\label{def:interpol}
Let $\T = \pt$ be a labeled event tree and let  $\Lambda(\T)$ denote the set of  root-to-leaf paths in $\T$.
For $\lambda\in \Lambda(\T)$ let $\edgefun{\lambda}$ be the set of edges of
$\lambda$. We call the products of the labels along a root-to-leaf path,
$\pithetalambda=\prod_{e\in \edgefun{\lambda}}\theta(e)$, \emph{atomic monomials}.

Given a real-valued function $g:\Lambda(\T)\rightarrow \mathbb{R}$, 
we define the \emph{network polynomial} of $\T$ and $g$, the linear combination of the atomic monomials with coefficients given by $g$, as:
\begin{equation}\label{eq:networkpol}
c_{g,\T}=\summ{\lambda\in\Lambda(\T)}\;g(\lambda)\cdot\pithetalambda
\end{equation}
with the particular case $c_{g,\T}=1$ if   $\T$ has no edges.
The \emph{interpolating polynomial} is the network polynomial with all $g(\lambda)=1$ equal to one, and we write $\cT=c_{1,\T}$.  
\end{definition}

\begin{remark}
A \emph{network polynomial} $c_{g,\T}$ is a polynomial in the ring $\mathbb R[\Theta]$ of polynomials with real coefficients and
whose indeterminates are the labels in $\Theta$.
An \emph{interpolating polynomial}  $\cT$ is
a polynomial with positive integer coefficients by construction. For these we write $ \cT \in  \ZZ[\Theta]$. 
\end{remark}

\begin{example} 
When $\T=\pt$ is a probability tree,  every atomic monomial $\pithetalambda$ is the product of transition probabilities along a root-to-leaf path and thus the probability of an \emph{atomic event} (or atom). 
Often the function $g$ is an indicator function
$g=\mathbbm{1}_{A}$ of an event $A\subseteq\Lambda(\T)$.
In this case, \cref{eq:networkpol} is a polynomial representation of the
finite-additivity property of probabilities for $A$, 
so $c_{\mathbbm{1}_{A},\T}=\sum_{\lambda\in A}\pithetalambda$.
\end{example}

Interpolating polynomials have been used successfully to classify equivalence classes of staged trees which make the same distributional assumptions~\citep{meins}, as outlined in \cref{sub:equivalence} below. They have further been used as a tool  for calculating marginal and conditional probabilities in Bayesian networks and staged trees, using differentiation operations~\citep{darwiche:networkpol,manuich}.

In \cref{prop:nesting1} and \cref{prop:nesting2} for the purposes of this paper we now present two central results on interpolating polynomials. These results are given here in a reformulated, recursive form and very different from their original development \cite[Proposition 1]{meins}. This refinement is necessary because the new proofs we give are constructive and, most importantly, transparently illustrate the mechanisms needed for our later algorithmic implementation.

\begin{theorem} 
\label{prop:nesting1}
Let $\T=\pt$ 
be an event tree and for $v\in V$ define 
\begin{equation}
\poly{\T_v}=\begin{cases}
1 & \text{ if } \floretfun{v}=\emptyset \\
\sum_{(v,w)\in \floretfun{v}} \theta(v,w)\cdot \poly{\T_w} &  \text{ otherwise. }
\end{cases}
\end{equation}
Then the interpolating polynomial $\cT$ of $\T$ is equal to $\poly{\T_{v_0}}$ where $v_0$ is the root of $\T$. 
\end{theorem}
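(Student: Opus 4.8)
The plan is to prove the slightly stronger statement that $\poly{\T_v}=c_{\T_v}$ for \emph{every} vertex $v\in V$, where $\T_v$ is the labeled subtree rooted in $v$ from \cref{def:tree}; the theorem is then the special case $v=v_0$, since $\T_{v_0}=\T$. I would argue by induction on the height of $\T_v$, i.e.\ the number of edges on a longest root-to-leaf path of $\T_v$ (equivalently, by structural induction on the tree). For the base case, if $\floretfun{v}=\emptyset$ then $v$ is a leaf, $\T_v$ has no edges, and \cref{def:interpol} gives $c_{\T_v}=1$, which matches the first branch in the definition of $\poly{\T_v}$.

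For the inductive step, suppose $\floretfun{v}\neq\emptyset$ and that the claim holds for every subtree of strictly smaller height, in particular for each $\T_w$ with $(v,w)\in\floretfun{v}$, since the longest root-to-leaf path from $w$ is one edge shorter than the longest such path from $v$. The key combinatorial observation is a bijection between $\Lambda(\T_v)$ and the disjoint union $\bigsqcup_{(v,w)\in\floretfun{v}}\Lambda(\T_w)$: each $\lambda\in\Lambda(\T_v)$ begins with a unique edge $(v,w)$ and its remainder is a root-to-leaf path $\lambda'$ in $\T_w$, and this correspondence is clearly invertible. Under it, $\edgefun{\lambda}=\{(v,w)\}\cup\edgefun{\lambda'}$, so the atomic monomial factorizes as $\pithetalambda=\theta(v,w)\cdot\pi_{\theta'}(\lambda')$. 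Substituting this into the definition of $c_{\T_v}$ and grouping the sum over $\Lambda(\T_v)$ by the first edge gives
\[
c_{\T_v}=\summ{\lambda\in\Lambda(\T_v)}\pithetalambda
=\sum_{(v,w)\in\floretfun{v}}\theta(v,w)\summ{\lambda'\in\Lambda(\T_w)}\pi_{\theta'}(\lambda')
=\sum_{(v,w)\in\floretfun{v}}\theta(v,w)\cdot c_{\T_w},
\]
and by the inductive hypothesis $c_{\T_w}=\poly{\T_w}$, which is precisely the second branch of the definition of $\poly{\T_v}$. This closes the induction and hence proves the theorem.

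The argument is essentially bookkeeping, and I do not expect a genuine obstacle; the only points requiring care are the precise statement and verification of the path bijection together with the corresponding factorization of atomic monomials, and the degenerate sub-case in which a child $w$ is itself a leaf (then $\Lambda(\T_w)$ is the singleton consisting of the trivial path and $c_{\T_w}=1$, consistently with the base case). One should also make sure the induction is well-founded: using the height of $\T_v$ rather than, say, its vertex count keeps the ``largest subtree rooted in $v$'' from \cref{def:tree} unambiguous and the recursion manifestly terminating.
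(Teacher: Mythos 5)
Your proposal is correct and follows essentially the same route as the paper: induction on the depth/height of the tree, with the inductive step decomposing the root-to-leaf paths by their first edge so that $c_{\T}=\sum_{(v_0,w)\in\floretfun{v_0}}\theta(v_0,w)\cdot c_{\T_w}$ and then invoking the inductive hypothesis on the subtrees. Your version merely makes the path bijection and the monomial factorization more explicit than the paper does.
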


\begin{proof}
We prove the claim by induction on the \emph{depth} of the tree,
i.e.\,the number of edges in the longest root-to-leaf path. 
If $\T$ has depth $=0$ then $\floretfun{v_0} =\emptyset$ and $c_\T = 1 = \poly{\T_{v_0}}$. 
If $\T$ has depth $\ge1$ then
\begin{equation}
\poly{\T_{v_0}}
=\summ{(v_0,w)\in \floretfun{v_0}}\theta(v_0,w)\cdot \poly{\T_w}.
\end{equation}
Furthermore, 
\begin{equation}
\cT =
\summ{\lambda\in\Lambda(\T)}\pithetalambda
\;=\;
\summ{(v_0,w)\in \floretfun{v_0}}
\theta(v_0,w)\cdot 
\summ{\lambda' \in \Lambda(\T_{w})} \pi_\theta(\lambda')
\;=\;
\summ{(v_0,w)\in \floretfun{v_0}}
\theta(v_0,w) \cdot c_{\T_{w}}
\end{equation}
and  $\poly{\T_{w}} = c_{\T_{w}}$
by the inductive hypothesis because the subtrees $\T_w$ all have lower depths than $\T$.
\end{proof}

\begin{figure}[tb]
\centering
\subfloat[A labeled event tree $\pt$.]{
\includegraphics[scale=1]{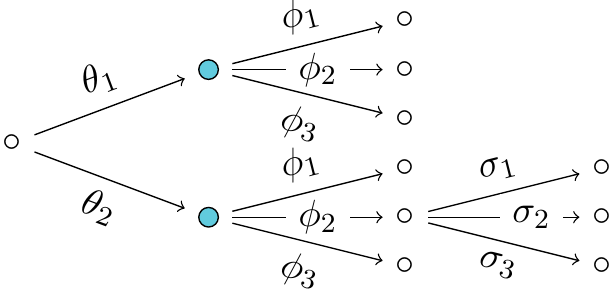}\label{fig:nestinga}}
\qquad
\subfloat[A labeled event tree $\st$.]{
\includegraphics[scale=1]{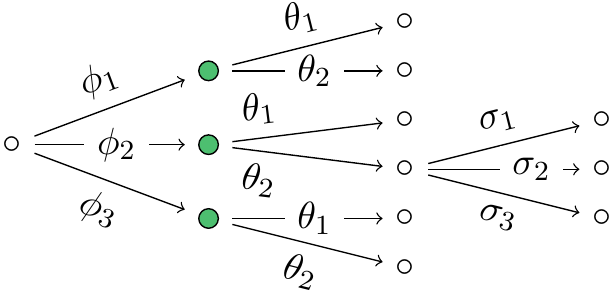}\label{fig:nestingb}}
\caption{
Two staged trees with the same interpolating polynomial but different graphs.
See Examples~\ref{ex:running0} and~\ref{ex:running2} and 
\cref{eq:nestingbsp1} and~(\ref{eq:nestingbsp2}) in
\cref{ex:running1}.}
\label{fig:nesting}
\end{figure}

\begin{example}\label{ex:running0}
The two staged trees $\T=\pt$ and $\St=\st$ in \cref{fig:nesting} 
have the same interpolating polynomial, so the same sum of atomic
monomials:
\begin{equation}\label{eq:exatomic}
\cT
=c_{\St}
=\theta_1\phi_1+\theta_1\phi_2 +\theta_1\phi_3 
+\theta_2\phi_1 
+\theta_2\phi_2\sigma_1+\theta_2\phi_2\sigma_2+\theta_2\phi_2\sigma_3 
+\theta_2\phi_3.
\end{equation}
Here, the functions $\theta$ and $\theta'$ assign the same labels to different edges in the graphs $T$ and $S$. 
Following the recursive construction in \cref{prop:nesting1}, we can then write
this polynomial in terms of the interpolating polynomials of subtrees:
\begin{equation}\label{eq:exrootexpct}
\cT=\poly{\T}=
\theta_1\cdot \poly{\T_1}
+ \theta_2\cdot \poly{\T_2}
\end{equation}
where 
 $\poly{\T_1}=\phi_1+\phi_2$ and 
 $\poly{\T_2}=\phi_1+\phi_2\cdot(\sigma_1{+}\sigma_2{+}\sigma_3)+\phi_3$; 
or alternatively
\begin{equation}\label{eq:exrootexpcs}
c_{\St}=\poly{\St}=   
\phi_1\cdot \poly{\St_1}
+ \phi_2\cdot \poly{\St_2}
+ \phi_3\cdot \poly{\St_3}
\end{equation}
where 
$\poly{\St_1}=\theta_1+\theta_2$,
$\poly{\St_2}=\theta_1+\theta_2 \cdot(\sigma_1{+}\sigma_2{+}\sigma_3)$ 
and 
$\poly{\St_3}=\theta_1+\theta_2$.
\end{example}

\Cref{ex:running0} shows that the distributive property of multiplication over addition is at the core of our work. 
The following corollary will be useful for studying staged trees with 
square-free atomic monomials: compare also \cref{prop:rootfloretlabels} below.

\begin{corollary}\label{cor:rootexpansion}
Let $\T=\pt$ be a labeled event tree and 
let $\cT$ be its interpolating polynomial. 
Then  we can write
\begin{equation}\label{eq:rootexpansion}
\cT=\summ{ (v_0,w)\in \floretfun{v_0} } \theta(v_0,w) \cdot c_{\T_w}.
\end{equation}
Moreover, if the root labels are not repeated, i.e.~$\theta_{v_0} \cap \theta_v=\emptyset$ for all $v\in
V{\setminus} \{ v_0\}$, then no label in $\theta_{v_0}$
appears in any subtree-interpolating polynomial $c_{\T_w}$.
\end{corollary}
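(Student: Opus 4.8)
The plan is to reduce the first identity to \cref{prop:nesting1} and then to argue about which indeterminates can occur in the polynomials $c_{\T_w}$.

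For \eqref{eq:rootexpansion} I would simply note that the displayed chain of equalities for $\cT$ inside the proof of \cref{prop:nesting1} already \emph{is} this identity: grouping the atomic monomials of $\cT$ by the unique edge $(v_0,w)\in\floretfun{v_0}$ that the corresponding root-to-leaf path traverses first, each group factors as $\theta(v_0,w)$ times the sum of atomic monomials of the subtree $\T_w$, which by \cref{def:interpol} equals $c_{\T_w}$. So this part costs only one line. (If $\T$ has no edges the right-hand side of \eqref{eq:rootexpansion} is an empty sum, so one reads the identity under the standing assumption $\floretfun{v_0}\neq\emptyset$, the empty-tree case $\cT=1$ being trivial.)

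For the ``moreover'' part I would control the set of indeterminates appearing in each $c_{\T_w}$. By \cref{def:interpol} applied to the subtree $\T_w$ we have $c_{\T_w}=\sum_{\lambda'\in\Lambda(\T_w)}\prod_{e\in\edgefun{\lambda'}}\theta(e)$, so every indeterminate occurring in $c_{\T_w}$ has the form $\theta(e)$ for some edge $e$ of the subtree $T_w$, i.e.\ $e\in\edgeset(T_w)$. Now I would invoke the tree structure: because $w$ is a child of the root $v_0$ and $T$ is a rooted tree, $v_0$ is not a vertex of $T_w$; consequently every edge $e$ of $T_w$ emanates from some vertex $u\in V(T_w)\subseteq V\setminus\{v_0\}$, whence $\theta(e)\in\theta_u$. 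The hypothesis $\theta_{v_0}\cap\theta_u=\emptyset$ then forces $\theta(e)\notin\theta_{v_0}$. Since this holds for every edge of $T_w$ and every $w$ with $(v_0,w)\in\floretfun{v_0}$, no label of $\theta_{v_0}$ appears in any $c_{\T_w}$, as claimed.

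I do not expect a genuine obstacle here; the statement is essentially a bookkeeping refinement of \cref{prop:nesting1}. The two points that require a little care are: making precise, straight from the definition of the interpolating polynomial, that the indeterminates of $c_{\T_w}$ lie among $\{\theta(e):e\in\edgeset(T_w)\}$; and the elementary observation that the root $v_0$ belongs to no proper rooted subtree $T_w$, so that every edge of $T_w$ emanates from a non-root vertex whose floret labels are, by hypothesis, disjoint from $\theta_{v_0}$.
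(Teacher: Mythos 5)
Your proposal is correct and follows the same route as the paper, whose entire proof is the one-line remark that the corollary is a trivial consequence of the construction of $\poly{\T_{v_0}}$ in \cref{prop:nesting1}. You merely make explicit the two bookkeeping facts the paper leaves implicit (that the indeterminates of $c_{\T_w}$ are labels of edges of $T_w$, and that every such edge emanates from a non-root vertex whose floret labels are disjoint from $\theta_{v_0}$ by hypothesis), which is a faithful expansion rather than a different argument.
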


\begin{proof} 
The proof is a trivial consequence of the construction of 
the polynomial $\poly{\T_{v_0}}$ in \cref{prop:nesting1} above.
\end{proof} 

\begin{example}\label{ex:running2}
 Consider again the two staged trees in \cref{ex:running0}. 
Their   interpolating polynomial admits two different representations in
terms of a linear combination as in \cref{cor:rootexpansion}, namely the ones in \cref{eq:exrootexpct} and~\cref{eq:exrootexpcs}.
We can see here explicitly how the polynomials above depend on the variables in subtrees of $\pt$ and $\st$. In particular, both sets 
$\{\theta_1,\theta_2\}$ and $\{\phi_1,\phi_2,\phi_3\}$
provide potential root-floret labels of a corresponding tree representation.
\end{example}

\subsection{Polynomials with a nested representation}

We know now that we can straightforwardly read an interpolating polynomial, and in particular a recursive representation of that polynomial, from a labeled event tree. In this section and in \cref{sect:treecomp} we consider the inverse problem: given a polynomial in distributed form can we tell whether it is the interpolating polynomial of a labeled event tree? 
In order to answer this question first observe that the polynomials defined below admit a special structured representation and can be used as a surrogate for a labeled event tree as shown in \cref{prop:nesting2}.

\begin{definition}[Nested representation]\label{def:nestedpoly}
Let $f\in\ZZ[\Theta]$ be a polynomial with positive integer coefficients.  We say that $f$ admits a \emph{nested representation} if $f=1$ or if it can be written as $f=\sum_{x\in A}x\cdot f_x$ where $A\subseteq\Theta$ is such that $\# A\geq 2$ and, for each $x\in A$, the polynomial $f_x$ admits a nested representation.
\end{definition}

\begin{remark}\label{rem:positivecoefficient}
The recursion in \cref{def:nestedpoly} is finite because $\deg(f_x) = \deg(f)-1$, for by construction polynomials with nested representations have positive coefficients.
\end{remark}

The polynomial $\poly{\T_v}$ in \cref{prop:nesting1} is written in nested representation by construction. 
In this sense \cref{prop:nesting2} below is the inverse result of \cref{prop:nesting1}, and a polynomial admits a nested representation if and only if it is the interpolating polynomial of a labeled event tree.

\begin{proposition}\label{prop:nesting2}
If $f\in\ZZ[\Theta]$ admits  a nested representation then there exists a labeled event tree $\T$ such that $f = c_\T$.
\end{proposition}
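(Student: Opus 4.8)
The plan is to construct the tree $\T$ recursively, mirroring the recursive structure of the nested representation itself, and then verify that the interpolating polynomial of the tree produced equals $f$. This is essentially the inverse construction to \cref{prop:nesting1}, so the proof will be an induction on the degree of $f$ (which is well-founded by \cref{rem:positivecoefficient}).

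First I would handle the base case: if $f=1$, then by \cref{def:interpol} the tree with a single vertex and no edges has interpolating polynomial equal to $1$, so we take $\T$ to be that trivial tree. For the inductive step, suppose $f\neq 1$ admits a nested representation $f=\sum_{x\in A}x\cdot f_x$ with $A\subseteq\Theta$, $\#A\geq 2$, and each $f_x$ admitting a nested representation with $\deg(f_x)=\deg(f)-1$. By the inductive hypothesis, for each $x\in A$ there is a labeled event tree $\T_x=(T_x,\theta_x)$ with $c_{\T_x}=f_x$. I would then build $\T$ by introducing a fresh root vertex $v_0$, attaching one edge from $v_0$ to the root of each $\T_x$, and labeling that edge by the symbol $x$; the labeling function $\theta$ on the new tree restricts to $\theta_x$ on each subtree $T_x$ and sends the edge into $\T_x$ to $x$. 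Since $\#A\geq 2$, the root $v_0$ has at least two emanating edges, so $\T$ is a genuine event tree in the sense of \cref{def:tree}; the labels $\{x : x\in A\}$ on the root floret are distinct because $A$ is a set. Applying \cref{cor:rootexpansion} (equivalently \cref{prop:nesting1}) to $\T$ gives $c_\T=\sum_{x\in A}x\cdot c_{\T_x}=\sum_{x\in A}x\cdot f_x=f$, completing the induction.

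The main subtlety — and the step I would be most careful about — is that \cref{def:tree} requires, for each floret, the edge labels within that floret to be \emph{distinct}, but it does not forbid the same symbol from $\Theta$ being reused in a different floret. This is exactly what permits the construction above to go through even when a label $x\in A$ also appears inside one of the $f_x$ or across different subtrees: the only constraint that must be checked at the new root is that $A$ has no repeated elements, which is automatic. I would make this point explicit, since it is precisely the freedom that makes a nested representation (rather than a stronger square-free or variable-disjointness condition) the right notion to capture interpolating polynomials, and it contrasts with \cref{cor:rootexpansion}'s extra clause about non-repeated root labels. No genuine obstacle arises beyond this bookkeeping: the recursion terminates by \cref{rem:positivecoefficient}, and the equality $c_\T=f$ is a direct consequence of \cref{prop:nesting1} applied to the tree just built.

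One may optionally remark that the constructed tree is far from unique — different choices of $A$ at each level (when $f$ admits several nested representations, as in \cref{ex:running0}) yield different, though interpolating-polynomial-equivalent, trees — which motivates the algorithmic search described in the remainder of the paper. I would keep the proof itself short and constructive, deferring that discussion to the surrounding text.
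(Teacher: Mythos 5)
Your proof is correct and follows essentially the same route as the paper's: induction on $\deg(f)$, with the single-vertex tree as base case and the inductive step gluing the subtrees $\T_x$ with $c_{\T_x}=f_x$ onto a fresh root via edges labeled by the elements of $A$. The extra care you take about label reuse across florets and the event-tree condition $\#A\geq 2$ is sound but only makes explicit what the paper leaves implicit.
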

\begin{proof}
We prove the claim by induction on the degree of $f$. 
If $\deg(f)=0$ then $f=1$ and therefore $f=c_\T$ where $\T$ is formed by a single vertex with no edges and no labels.

If $\deg(f)>0$ then $f=\sum_{x\in A} x \cdot f_x$ and therefore by 
\cref{rem:positivecoefficient} and by induction $f_x=c_{\T_x}$ for
some tree $\T_x$ labeled over $\Theta$. For all $x\in A$ let $v_x$ be the root of $\T_x$. Then a tree $\T$ with interpolating
polynomial $f$ can be constructed by taking a new vertex $v_0$
assigned as the root of $\T$ and defining the edges of the root
floret $\floretfun{v_0}$ to be $\{(v_0, v_x)\mid x\in A\}$. 
Then $f=c_\T$.
\end{proof}
 
The result above implies in particular that if $f$ is a polynomial with nested representation $f=\sum_{x\in A} x \cdot f_x$ then the root labels of a tree with interpolating polynomial $f$ are given by  $A$. 

\begin{example}
\label{ex:running1}
The nested representations of the two event trees $\T$ and $\St$ in
\cref{fig:nesting} are
\begin{subequations}
\begin{align}
\cT
&=\theta_1(\phi_1+\phi_2+\phi_3)+ \theta_2(\phi_1+\phi_2(\sigma_1{+}\sigma_2{+}\sigma_3)+ \phi_3),\label{eq:nestingbsp1}\\
c_{\St}
&= \phi_1(\theta_1+\theta_2) +\phi_2(\theta_1+\theta_2(\sigma_1{+}\sigma_2{+}\sigma_3))+ \phi_3(\theta_1+\theta_2)\label{eq:nestingbsp2}
\end{align}
\end{subequations} 
as in Examples~\ref{ex:running0} and~\ref{ex:running2}. These nestings are in one-to-one correspondence with the depicted trees, just as stated in \cref{prop:nesting2}. 
\end{example}

\begin{example}\label{ex:coeff2}
Let $\Theta = \{ \theta_1,\theta_2,\theta_3\}$ and consider the polynomial $f = \theta_1\theta_2+\theta_2\theta_3+ 2\theta_1\theta_3\in\ZZ[\Theta]$. Then $f$ has  nested representation $\theta_1{\cdot}(\theta_2+\theta_3) + \theta_3{\cdot}(\theta_1+\theta_2)$ corresponding to a labeled event tree which is not staged.
\end{example}

\begin{example}\label{example1poly2factor2trees}
Let $\Theta = \{\theta_0, \theta_1, \theta_2, \theta_3, \phi_1,\phi_2\}$ and consider the polynomial
\begin{equation}
f = \theta_0 + 
\theta_1\phi_1 +\theta_1\phi_2 +
\theta_2\phi_1 +\theta_2\phi_2 +
\theta_3\phi_1 +\theta_3\phi_2.
\end{equation}

Then $f$ admits three different nested representations:
\begin{subequations}
\begin{align}
f &= \theta_0 {\cdot}(1)+ 
\theta_1{\cdot}(\phi_1 +\phi_2) +
\theta_2{\cdot}(\phi_1 +\phi_2) +
\theta_3{\cdot}(\phi_1 +\phi_2),\label{eq:examplenest_staged}\\
&= \theta_0 {\cdot}(1)+ 
\phi_1{\cdot}(\theta_1 +\theta_2 + \theta_3) +
\phi_2{\cdot}(\theta_1 +\theta_2 + \theta_3),\label{eq:examplenest_skeleton}\\
&= \theta_0 {\cdot}(1)+ 
\theta_1{\cdot}(\phi_1 +\phi_2) +
\phi_1{\cdot}(\theta_2 + \theta_3) +
\phi_2{\cdot}(\theta_2 + \theta_3).\label{eq:examplenest_notstaged}
\end{align}
\end{subequations}
In particular, \cref{eq:examplenest_staged} corresponds to the staged tree in \cref{fig:notstaged_staged} and \cref{eq:examplenest_skeleton} to the staged tree in \cref{fig:skeleton}. In \cref{sect:polytograph} we show that there are no other staged
trees with interpolating polynomial $f$.  The third nested representation \cref{eq:examplenest_notstaged} corresponds to the labeled event tree in \cref{fig:notstaged} which is not staged.
\end{example}

In the above examples, a given polynomial can admit several different nested representations. By the result below, this is not always the case.

\begin{proposition}[Saturated trees]\label{rem:saturated} 
For a saturated tree $\T$, the interpolating polynomial $\cT$
has a unique nested representation.
\end{proposition}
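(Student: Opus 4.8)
The plan is to prove this by induction on the depth of the saturated tree $\T$, exploiting \cref{cor:rootexpansion}. For the base case, if $\T$ has depth $0$ then $\cT = 1$, which admits only the trivial nested representation $f=1$. For the inductive step, suppose $\T$ has depth $\geq 1$ with root $v_0$ and root floret edges $\floretfun{v_0}$. Since $\T$ is saturated, all labels in $\Theta$ appearing in $\cT$ are distinct, and in particular $\theta_{v_0} \cap \theta_v = \emptyset$ for all $v \neq v_0$; so by \cref{cor:rootexpansion} we have $\cT = \sum_{(v_0,w)\in\floretfun{v_0}} \theta(v_0,w)\cdot c_{\T_w}$, and no label in $\theta_{v_0}$ occurs in any $c_{\T_w}$. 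Each subtree $\T_w$ is itself saturated and of strictly smaller depth, so by the inductive hypothesis $c_{\T_w}$ has a unique nested representation.

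The heart of the argument is then to show that \emph{any} nested representation $\cT = \sum_{x\in A} x\cdot f_x$ must have $A = \theta_{v_0}$ and must in fact coincide with the decomposition coming from \cref{cor:rootexpansion}. I would argue as follows. Because every atomic monomial $\pithetalambda$ of a saturated tree is a product of \emph{distinct} labels (square-free), and no label is repeated anywhere in the tree, each label $x\in\Theta$ that occurs in $\cT$ occurs to the first power in every monomial containing it. Fix any nested representation $\cT = \sum_{x\in A} x\cdot f_x$ with $\#A \geq 2$. The set $A$ is exactly the set of labels $x$ such that, after collecting, the polynomial $f_x := \partial \cT / \partial x$ evaluated appropriately is nonzero and such that $x \mid (\cT - \sum_{y \in A, y\neq x} y f_y)$; more concretely, since the monomials of $\cT$ are square-free, writing $\cT = \sum_{x\in A} x f_x$ forces each monomial of $\cT$ to be assigned to exactly one $x\in A$ dividing it, and $f_x$ is the sum of the cofactors of those monomials divisible by $x$. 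I would show that $A$ must equal the set of first-level labels $\theta_{v_0}$: any label $x \in \theta_{v_0}$ divides the monomial $\pithetalambda$ for every path $\lambda$ through the corresponding child, and these cofactors cannot be produced by factoring out a different label; conversely a label $y \notin \theta_{v_0}$ lies strictly deeper, so the monomial $\theta(v_0,w)$ (when $w$ is a leaf child) or more carefully the "shortest" monomials through each child are divisible only by root labels, preventing $y$ from being a valid first factor for a representation with $\#A\geq 2$ unless it duplicates — impossible by saturation.

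The technical core — and the step I expect to be the main obstacle — is making precise the claim that in a square-free, no-repeated-label polynomial the "first factoring step" of a nested representation is forced to be the partition of monomials according to their (unique) root-floret label, with no freedom in how monomials are grouped. The subtlety is that a single monomial could a priori be "factored" through any of its several labels, so one must rule out mixed groupings. I would handle this by a counting/consistency argument: if $x\cdot f_x$ and $y\cdot f_y$ are both summands ($x\neq y$ in $A$) and some monomial $m$ divisible by both $x$ and $y$ is assigned to $x$, then consider the sibling structure — in a tree the labels $x,y$ lie on a single path through $v_0$ only if one is an ancestor-floret label of the other, and the unique root-floret label among all labels dividing $m$ is the one at depth $1$; then using that $f_x$ and $f_y$ must themselves be nested (hence have positive coefficients, \cref{rem:positivecoefficient}) and comparing the monomials they must contain, one derives that the only consistent choice forces $A=\theta_{v_0}$ and $f_{\theta(v_0,w)} = c_{\T_w}$. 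Once $A = \theta_{v_0}$ is pinned down, the $f_x = c_{\T_w}$ are determined uniquely as the respective subtree-interpolating polynomials, and the inductive hypothesis finishes the claim that their nested representations are unique, so the nested representation of $\cT$ is unique.

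Alternatively, if the grouping argument proves delicate, one can instead exhibit the saturated tree from the polynomial directly: show that the multiset of labels occurring in a single monomial $\pithetalambda$ — together with the divisibility poset on these labels induced across all monomials — reconstructs the tree $\T$ up to isomorphism, and that any nested representation is forced to follow this poset level by level. The obstruction is the same — establishing rigidity of the first level — but phrased combinatorially on the divisibility order rather than via repeated appeals to \cref{cor:rootexpansion}.
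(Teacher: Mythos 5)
Your overall strategy---induct on the tree, and reduce everything to showing that the first level of any nested representation is forced to be $\theta_{v_0}$---is the same reduction the paper makes: via \cref{prop:nesting2} a nested representation of $\cT$ is a labeled event tree $\T'$ with $c_{\T'}=\cT$, and the whole proof consists of pinning down its root floret and recursing. The problem is that the one step you yourself flag as \enquote{the main obstacle}---ruling out a label $y\notin\theta_{v_0}$ as a member of $A$---is exactly the mathematical content of the proposition, and your proposal contains no actual argument for it. The two substitutes you offer do not work as stated: the claim that the \enquote{shortest} monomials through each child are divisible only by root labels is false whenever a child of the root is not a leaf (the shortest monomial through that child already contains second-level labels), and the \enquote{counting/consistency argument} about sibling structure is a plan, not a proof. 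Note also that the easy half, $\theta_{v_0}\subseteq A$, is only easy \emph{after} you know $A\subseteq\theta_{v_0}$: a priori a monomial divisible by $\theta_1$ could be assigned to a deeper label dividing it, so nothing is settled until the hard half is done.

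Here is the idea you are missing, which is how the paper closes exactly this gap. Suppose $y\in A$ with $y\notin\theta_{v_0}$. By saturation $y$ labels exactly one edge of $\T$, lying below a unique root edge labeled, say, $\theta_1$; hence every monomial of $\cT$ divisible by $y$ is also divisible by $\theta_1$. Therefore, in the tree $\T'$ realizing the nested representation, \emph{every} root-to-leaf path through the root edge labeled $y$ contains an edge labeled $\theta_1$, and exactly one such edge because the monomials are square-free. Take the path on which this $\theta_1$-edge occurs at greatest depth. Since $\T'$ is an event tree, the floret containing that edge has at least one sibling edge; a root-to-leaf path through the sibling shares the prefix up to that floret, and the prefix contains no $\theta_1$ (the original path meets $\theta_1$ only at the floret) but does start with the root edge labeled $y$, so this sibling path must pick up $\theta_1$ strictly deeper---contradicting maximality. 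This single observation is what makes the first level rigid; once $A=\theta_{v_0}$ is established, each cofactor $f_{\theta_i}$ is forced to be the corresponding subtree polynomial (the root labels partition the square-free monomials) and your induction goes through. Without this or an equivalent rigidity argument, the proof is incomplete.
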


\begin{proof}
Let $\T'$ be a labeled event tree, not necessarily saturated
    nor staged, with interpolating polynomial $c_{\T'} = c_\T$.     
    We prove that $\T'=\T$, i.e. $\T'$ is indeed the saturated tree~$\T$.   

    Let $C = \supp(c_\T)$
be the set of power-products (or
  \emph{monomials}) in $c_\T$, 
and for a label $x$
 indicate the set of all multiples of that label with
    $C_x=\{t \in c \mid t\text{ multiple of } x\}$.

Let $F=\{\theta_1,\dots,\theta_s\}$ 
and $F'$, respectively,  be the set of root-floret labels of $\T$ and
$\T'$, so $\theta_{v_0}$ in \cref{def:tree} w.r.t.~$\T$
and $\T'$.
We first prove that $F=F'$.
For any $\theta_i\in F$ the power-products 
in $c_{\theta_i}$, corresponding to the root-to-leaf paths originating
from the root-edge in $\T$ which is labeled $\theta_i$, are not multiples of any
$\theta_j$ for $i\ne j$ because $\T$ is saturated.  Thus, if
$F'\subsetneq F$ and $\theta_i\not\in F'$
then the power-products in
$c_{\theta_i}$ could not
correspond to root-to-leaf paths in $\T'$.

It follows that if $F\ne F'$ then there must be a label $\phi\in F'$ with
$\phi\not\in F$. Since $\T$ is saturated,
$\phi$ is the label of only one edge in $\T$, and this edge is, say, 
in the subtree
 starting from the root edge labeled $\theta_1$.
In terms of the power-products,
this implies that $C_\phi \subseteq C_{\theta_1}$.
Hence,  in $\T'$
all root-to-leaf paths originating from the root edge labeled by $\phi$ 
 must have an edge labeled $\theta_1$: see the figure below.
\begin{figure}[h]
\centering
\includegraphics[scale=1]{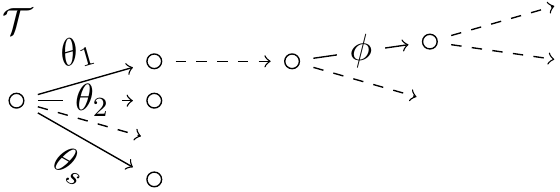}
\qquad
\raisebox{4pt}{\includegraphics[scale=1]{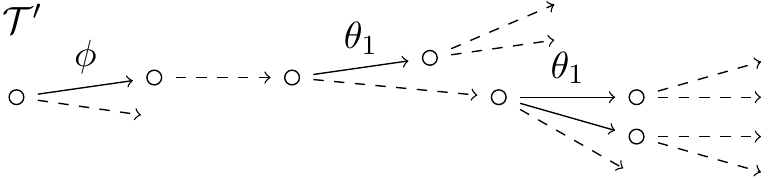}}
\end{figure}

Now consider the root-to-leaf path in $\T'$ where $\theta_1$ appears at
greatest depth, i.e.~with the longest path from the root vertex.
The floret containing $\theta_1$ must have at least another edge so
the paths through this other edge have $\theta_1$ at greater depth. But
this is a contradiction.
Hence ${F=F'}$.  

The subtrees of $\T$ rooted in 
the $s$~children of its root
are again saturated trees, and their
interpolating polynomials are 
$\sum_{t\in c_{\tau}}t$ for $\tau\in\{\theta_1,\ldots,\theta_s\}$ and have 
disjoint sets of labels because $\T$ is saturated.  Therefore we can
repeat the reasoning above on these subtrees and their interpolating
polynomials. We conclude in a finite number of steps that $\T=\T'$.
\end{proof}

Thus when reading an interpolating polynomial from a tree, instead of summing atomic monomials
as in \cref{def:interpol} we can directly use the tree graph to infer a bracketed, nested representation of that polynomial. This representation is in one-to-one correspondence with the labeled graph itself, so the original representation can be easily recovered. Similarly, once we are given \emph{any} polynomial in distributed form and this polynomial admits such a nested bracketing then we can always find a corresponding tree representation.
These insights open the door to replace graphical representations of statistical models by polynomial representations, and hence enable us to employ computer algebra in their study. We will show how this can be done in the next section.


\section{Polynomial and statistical equivalence}\label{sub:equivalence}

Computer algebra is often used to study polynomials that arise naturally in
statistical inference. For instance, context-specific Bayesian networks, staged
trees and chain event graphs are all \emph{parametric} statistical models whose
probability mass function is of monomial form:
$p_{\boldtheta}(x)=\boldtheta^{\alpha_x}
= \theta_1^{\alpha_{x,1}}\cdots \theta_d^{\alpha_{x,d}}$
for every atom $x$ in an underlying sample space where
${\alpha_x=(\alpha_{x,1},\dots,\alpha_{x,d})\in \ZZ_{\ge0}^{d}}$.
This monomial
$\boldtheta^{\alpha_x}$ can then be thought of as for instance a product of
\emph{potentials}~\citep{lauritzen} or simply a product of edge
probabilities in a staged tree with root-to-leaf paths as atoms. So
the network and interpolating polynomials as in \cref{def:interpol} can be
defined for all parametric models admitting a general \emph{monomial
  parametrization} as given above~\citep{manuich2}. 
We can then apply the theory above to these models and employ computer algebra techniques in their study. In particular, often very different parametrizations can give rise to the same model and the interpolating polynomial can help to determine these.

\begin{definition}[Polynomial and statistical equivalence]\label{def:equivalence}
Two staged trees $\T=\pt$ and $\St=\st$ with the same label set $\Theta$ are called \emph{polynomially equivalent} if their interpolating polynomials are equal.

Two staged trees $\T=\pt$ and $\St=\st$ with possibly different label sets, say $\Theta$ and $\Xi$, are called \emph{statistically equivalent} if  
there is a bijection $\Psi:\Lambda(\T)\to\Lambda(\St)$ which identifies their root-to-leaf paths and 
 for any evaluation function on $\Theta$, namely $\Val_{\Theta} :  \Theta \to (0,1)$ extended to $\lambda\in \Lambda(\T)$ as $\Val_\Theta(\lambda) = \prod_{e\in\lambda} \Val_\Theta(\theta(e))$, there exists an evaluation on $\Xi$,  $\Val_\Xi: \Xi \to (0,1)$, such that $\Val_\Theta(\lambda) = \Val_\Xi(\Psi(\lambda))$ for all $\lambda\in \Lambda(\T)$. 
\end{definition}

By definition, two staged trees whose labels are evaluated as probabilities are statistically equivalent if and only if they represent the same statistical model.

Since the interpolating polynomials of polynomially equivalent trees are equal, they are the sum of the same atomic monomials. Therefore there is a bijection between the root-to-leaf paths of polynomially equivalent trees. This implies that polynomially equivalent trees are also statistically equivalent.  
For instance, the trees from Examples~\ref{ex:running2} and \ref{ex:running1} are polynomially, and so statistically equivalent. In particular, the interpolating polynomial is sufficient to determine a probability distribution up to a permutation of the values it takes across an underlying sample space. 
 
From \cref{prop:nesting2}, the class of polynomially equivalent trees is fully described by \emph{all} nested representations of the interpolating polynomial. Indeed, when reordering the terms of a nested representation as in \cref{fig:nesting}, the atomic monomials of the underlying tree do not change. So if we are given the interpolating polynomial of a 
staged tree and we can find all its possible nested representations then we have automatically found all of its polynomially equivalent tree representations -- and often a large subclass of the whole statistical equivalence class. For example, in the case of decomposable Bayesian networks the equivalence class of a polynomial given in clique parametrization contains the Markov-equivalence class~\citep{CGphdthesis}.

Polynomially equivalent trees can be thought of as those having the same parametrization. However this parametrization is often read in a different \emph{non-commutative} way for different graphical representation in that class. For instance, the staged trees in Examples~\ref{ex:running2} and~\ref{ex:running1} have the same atomic monomials belonging to identified atoms but $\pithetalambda=\theta_1\phi_1$ in $\pt$ and $\pi_{\theta'}(\lambda')=\phi_1\theta_1$ in $\st$ for identified atoms $\lambda$ and $\lambda'$. Analogous instances of this phenomenon occur in the class of decomposable Bayesian networks where a model parametrization can be given by potentials on cliques which are renormalized across different graphical representations of the same model.

Statistically equivalent trees however can be thought of as reparametrizations of each other, very much like in Bayesian networks where a parametrization can either be based on parent relations between single nodes in a graph or alternatively on clique margins.
See also \cref{toricExInd}.

\begin{example}\label{bsp:skeleton} 
Polynomially equivalent trees can often be described by a variety of different graphs. For instance, the polynomial $c=\theta_0+(\theta_1+\theta_2+\theta_3)(\phi_1+\phi_2)$ has at least three different labeled trees associated: see \cref{fig:labeledevent} and \cref{example1poly2factor2trees}.

The two trees in Figs.~\ref{fig:notstaged_staged} and~\ref{fig:skeleton} are polynomially equivalent representations of the same model on seven atoms. The tree in \cref{fig:notstaged} is not because it is not a staged tree. In particular, this tree is not a probability tree because sum-to-$1$ conditions imposed on its florets would be contradictory.
\end{example} 

\begin{example}[Maximal representations]\label{rem:binary}
For any labeled event tree there exists a statistical equivalent \emph{binary} labeled event tree whose graph $\baretree$ is such that $\#\floretfun{v}\in\{0,2\}$ for all $v\in V$. This can be thought of as a \emph{maximal} representation within the class of statistically equivalent trees. 
We can easily obtain a binary tree by splitting up each floret with strictly more than two edges as shown in \cref{fig:maximal_representation}. 
\begin{figure}[t]
\centering 
\includegraphics[scale=1]{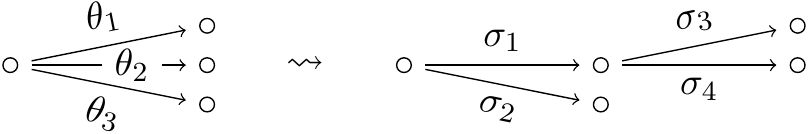}
\caption{Maximal and minimal representations of a floret. See Examples~\ref{rem:binary} and~\ref{example:minrepre}.}
\label{fig:maximal_representation}
\end{figure}
In particular, for a floret in a probability tree labeled by $\theta_1,\theta_2,\theta_3$, we would obtain new labels $\sigma_1,\sigma_2,\sigma_3,\sigma_4$ which are renormalizations of the original parameters such that sum-to-$1$ conditions hold, $\sigma_1+\sigma_2=1$ and $\sigma_3+\sigma_3=1$, while retaining the distribution over the three depicted atoms, so $\sigma_1=\theta_1+\theta_2$, $\sigma_1=\nicefrac{\theta_1}{\theta_1+\theta_2}$, $\sigma_2=\nicefrac{\theta_2}{\theta_1+\theta_2}$  and $\sigma_2=1-\sigma_1$. 
\end{example}

\begin{example}[Minimal representations]\label{example:minrepre}
In the polynomial equivalence class of a saturated tree there is exactly one member, namely the tree itself. This is because, by \cref{rem:saturated}, for saturated trees the nested representation of an interpolating polynomial is unique. The statistical equivalence class of a saturated tree however is much bigger. This is a consequence of \cref{rem:binary} above.  
In particular, for every saturated tree there is a unique \emph{minimal} graphical representation given by a single floret whose labels are the atomic monomials (or joint probabilities) and whose number of edges coincides with the number of root-to-leaf paths in any equivalent representation.
\end{example}

In the development in this paper we mainly focus on a parametric
characterization of staged tree and other statistical models. This naturally links in with an alternative \emph{implicit}
characterization which is well known in algebraic statistics. For
instance, a polynomial representation of a Bayesian network involving
exclusively the joint probabilities -- i.e.~the values of the associated
probability mass function $p(x)$ as $x$ varies in the sample space -- can be derived from the
equalities $p(x)=\boldtheta^{\alpha_x}$ using ring operations. The
algebraic theory behind this is called  \emph{elimination theory}~\citep{KR:book1} 
of which Gaussian elimination for solving systems of linear equations is a simple example. 
The representation of a Bayesian network as such a set of polynomials is an algebraic structure called a \emph{toric ideal} and has great importance in algebraic statistics: see e.g.~\cite{pistonerw:algstat,geigermeeksturm:algebra,oberwolfach}.

Notably, this alternative characterization can also be used to describe statistical equivalence -- though in a less constructive way than the method we present here and without immediate links to a graphical representation of a model.

\begin{figure}[t]
\centering
\subfloat[A staged tree representing a binary independence model.]{
\centering
\includegraphics[scale=1]{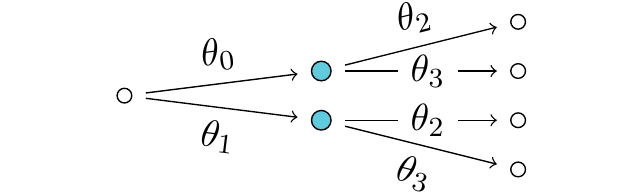}
\label{fig:toricex1}}
\quad
\subfloat[Minimal representation of the
saturated model on four atoms.]{
\centering
\includegraphics[scale=1]{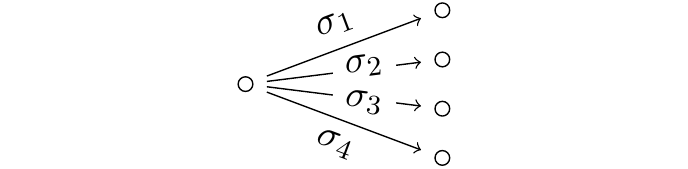}
\label{fig:toricex2}}
\caption{Trees analyzed in \cref{toricExInd}.}
\end{figure}

\begin{example}\label{toricExInd}
The labeled event tree in \cref{fig:toricex1} is a staged tree on four atoms with labels 
$\Theta=\{\theta_0, \theta_1, \theta_2, \theta_3\}$. 
The equalities holding for the four atomic monomials
\begin{equation}
p_1= \theta_0\theta_2 , \, \, p_2= \theta_0\theta_3 , \,\, p_3= \theta_1\theta_2 , p_4= \theta_1\theta_3
\end{equation}
imply the equality  $p_1p_4 = p_2p_3$. 
This parametrization of the model in \cref{fig:toricex1} is not to be confused with the minimal representation of the saturated model on four atoms in \cref{fig:toricex2}.    

An interpretation of this equation is as follows. Assume two binary random variables $X,{Y\in \{0,1\}}$ are such that
\begin{align}
\operatorname{Pr}(Y=1, \, X=1 ) &= p_1, \qquad \operatorname{Pr}(Y=0, \, X=1 ) = p_2, \\
\operatorname{Pr}(Y=1, \, X=0 ) &= p_3, \qquad \operatorname{Pr}(Y=0, \, X=0 ) = p_4 .
\end{align}   
Then $p_1p_4 = p_2p_3$ is an instance of a fundamental relationship in algebraic statistics for representing conditional independence of discrete random variables: see e.g.\ \cite[Section 6.10]{pistonerw:algstat} and \cite[Proposition 3.1.4]{oberwolfach}.
In this specific case the equality implies that $X$ and $Y$ are independent. 
%
%
\end{example}

\section{From polynomials to trees: finding the nested
  representations}\label{sect:polytograph}


\subsection{Potential root-floret  labels and square-free monomials}

Building on the results above we can now use methods from commutative
algebra to compute all the staged trees with
a given interpolating polynomial and so to compute a complete polynomial equivalence class.
The two key notions we use to build an algorithm which determines these classes are those of a monomial ideal and of its primary decomposition which, for square-free monomials, coincides with the prime decomposition. These notions are recalled in the appendix.

The key of the proposed algorithm is \cref{thm:minimal} below.  This states in algebraic terms  that
for any tree $\T=(T, \theta)$  each monomial in $c_\T$ is divisible by some label in the set ${F}=\theta_{v_0}$ of the floret labels belonging to the root of $\T$, and that ${F}$ is minimal (with respect to inclusion) with this property.

\begin{theorem}\label{thm:minimal}
Let $\T$ be a staged tree.
The monomial ideal  $\ideal{\theta_{v_0}}$ generated by the root-floret labels
is a minimal prime of the ideal $\ideal{\supp(\cT)}$ generated by the support of $c_\T$.
\end{theorem}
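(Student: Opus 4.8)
The plan is to show that $\langle\theta_{v_0}\rangle$ is (i) a prime ideal containing $\langle\supp(c_\T)\rangle$, and (ii) minimal among such primes. Part (i) is essentially a restatement of \cref{cor:rootexpansion}: every atomic monomial $\pi_\theta(\lambda)$ factors as $\theta(v_0,w)\cdot\pi_{\theta'}(\lambda')$ for the unique root edge $(v_0,w)$ on the path $\lambda$, hence every element of $\supp(c_\T)$ is divisible by some label in $\theta_{v_0}$, so $\supp(c_\T)\subseteq\langle\theta_{v_0}\rangle$ and therefore $\langle\supp(c_\T)\rangle\subseteq\langle\theta_{v_0}\rangle$. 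That $\langle\theta_{v_0}\rangle$ is prime is immediate: it is generated by a subset of the variables, so the quotient ring is again a polynomial ring, hence a domain. (Here I use that the labels in $\theta_{v_0}$ are distinct, by \cref{def:tree}.)

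The substance is the minimality in part (ii). I would argue by contradiction: suppose $\mathfrak p$ is a prime ideal with $\langle\supp(c_\T)\rangle\subseteq\mathfrak p\subsetneq\langle\theta_{v_0}\rangle$. Since $\mathfrak p$ is properly contained in $\langle\theta_{v_0}\rangle$, there is some root-floret label, say $\theta^{*}\in\theta_{v_0}$, with $\theta^{*}\notin\mathfrak p$. Now pick any root edge $(v_0,w)$ labelled $\theta^{*}$ and any root-to-leaf path $\lambda$ through it; the corresponding atomic monomial $\pi_\theta(\lambda)=\theta^{*}\cdot m$ lies in $\supp(c_\T)\subseteq\mathfrak p$. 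Because $\mathfrak p$ is prime and $\theta^{*}\notin\mathfrak p$, we get $m\in\mathfrak p$, where $m=\pi_{\theta'}(\lambda')$ is the atomic monomial of the tail path $\lambda'$ in $\T_w$. The key point I want to extract is that $m$ is \emph{not} divisible by any label of $\theta_{v_0}$ other than $\theta^{*}$ — and I can in fact choose $\lambda$ so that $m$ is not divisible by $\theta^{*}$ either (if $\T$ is a staged tree, the root stage is the only vertex set with labels $\theta_{v_0}$ only in the degenerate repeated case; more robustly, one traverses into $\T_w$ along edges avoiding the finitely many root labels, which is possible because at each vertex the floret has at least two edges). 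Then $m\in\mathfrak p\subseteq\langle\theta_{v_0}\rangle$ forces $m$ to be divisible by some label in $\theta_{v_0}$, a contradiction.

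The main obstacle is the step just sketched: producing, for each $\theta^{*}\in\theta_{v_0}$, an atomic monomial $\theta^{*}\cdot m$ in $\supp(c_\T)$ whose cofactor $m$ avoids the ideal $\langle\theta_{v_0}\rangle$. The clean way to handle this is to invoke the staged-tree hypothesis together with \cref{cor:rootexpansion}: when the root labels are not repeated, \cref{cor:rootexpansion} tells us that no label of $\theta_{v_0}$ appears in any $c_{\T_w}$, so $m\notin\langle\theta_{v_0}\rangle$ outright and the contradiction is immediate; the remaining case is where $\theta_{v_0}$ repeats at a deeper vertex, and there one must chase the repeated stage carefully — essentially rerunning the root-expansion argument inside $\T_w$ and using finiteness of the tree (every root-to-leaf path has finite length, and florets have $\ge 2$ edges) to guarantee one can eventually exit the repeated-label pattern. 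Once that combinatorial lemma is in place, primeness of $\mathfrak p$ does the rest, and minimality follows. I would also remark that since $\langle\theta_{v_0}\rangle$ is generated by variables it is already a minimal prime of any monomial ideal it contains iff none of those variables is redundant, which is exactly what the argument establishes.
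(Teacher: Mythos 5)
Your overall skeleton matches the paper's: containment plus primeness of $\ideal{\theta_{v_0}}$ is the easy half (and your argument for it is the same as the paper's), and minimality is proved by contradiction. But the step you defer to an unproven ``combinatorial lemma'' is not a technicality --- it is the entire content of the paper's proof and the only place the staged hypothesis enters --- and the one concrete justification you offer for it is wrong. You claim one can extend a path into $\T_w$ ``along edges avoiding the finitely many root labels, which is possible because at each vertex the floret has at least two edges.'' This fails exactly in the problematic case: if a deeper floret lies in the same stage as the root, then by \cref{def:treestaged} its label set \emph{equals} $\theta_{v_0}$, so \emph{every} one of its ($\ge 2$) edges carries a root label and none can be avoided. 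Having two or more edges buys you nothing there.

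The correct use of stagedness goes the other way round, and is what the paper does. Suppose $\tilde F\subsetneq\theta_{v_0}$ omits $\theta^*$ and $\ideal{\supp(\cT)}\subseteq\ideal{\tilde F}$. Start with the root edge labeled $\theta^*$. If every atomic monomial through this edge must be divisible by a label of $\tilde F$, then (by stagedness) a floret contributing such a label has label set exactly $\theta_{v_0}$, so at that floret you may \emph{choose} the edge labeled $\theta^*$ again; iterating, each time you are forced to meet a root label you pick $\theta^*$, producing monomials divisible by $\theta^{*2},\theta^{*3},\dots$. Finiteness of the tree terminates this and yields an atomic monomial of the form $\theta^{*k}m$ with $m$ divisible by no label of $\theta_{v_0}$; this monomial is not divisible by any element of $\tilde F$, the desired contradiction. (In your more general setting with an arbitrary prime $\mathfrak{p}\subsetneq\ideal{\theta_{v_0}}$ the same monomial works: primeness strips off $\theta^{*k}$ and leaves $m\in\mathfrak{p}\subseteq\ideal{\theta_{v_0}}$, which is absurd. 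Note also that by the paper's definition of a minimal prime of a monomial ideal it suffices to test sub-ideals $\ideal{\tilde F}$ with $\tilde F\subsetneq\theta_{v_0}$, so the primality trick is not actually needed.) Until this chase is carried out, your proof has a genuine gap at its central step.
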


\begin{proof}
Let ${F}=\theta_{v_0} = \{\theta_1, ..., \theta_s\}$ be the set of root-floret labels. 
Then each power-product in $c_\T$ is a multiple of some label in~$F$.
Because it is generated by indeterminates, $\ideal{F}$ is a prime ideal 
 containing all power-products in $c_\T$. 
Suppose, by contradiction, that ${F}$ is not minimal.
Then there exists $\tilde{F}\subsetneq F$
with  $\ideal{\tilde{F}}$ containing all power-products in $c_\T$.
Without loss of generality let $\tilde{F}=\{\theta_2, ... , \theta_s\}$.
Now, each root-to-leaf path starting with the root edge labeled
$\theta_1$ has an associated atomic monomial
$\theta_1t\in\supp(c_\T)\subseteq\tilde{F}$, $j\geq 2$.
 Therefore $\theta_1t= \theta_1\theta_jt'$ for some $\theta_j\in\tilde{F}$.
As $\T$ is staged, this implies that the whole root floret~$F$ must appear
again in the subtree: see the illustration below.
\begin{center}
\includegraphics[scale=1]{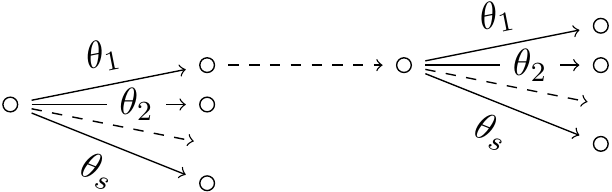}
\end{center}

Next consider the subtree containing the repeated root-floret labels at a
minimum depth and repeat the reasoning above: 
each root-to-leaf path containing the two edges labeled
$\theta_1$ corresponds to an atom $\theta_1^2t\in\supp(c_\T)$ and is 
therefore a multiple of some label in $\tilde{F}$.
Then the whole root floret is repeated again deeper in the subtree,
producing some atom divisible by $\theta_1^3$.
Since this reasoning can be repeated a finite number of times, we 
have the contradiction that there is an atomic monomial divisible by a power of~$\theta_1$ and
by no label in~$\tilde{F}$. 
Therefore  ${F}=\{\theta_1, ..., \theta_s\}$  is minimal.
\end{proof}

 \begin{example} \label{primdecperex}
The interpolating polynomial $\cT$ in \cref{ex:running0} has support 
\begin{equation}
\supp(\cT)=\{
\theta_1\phi_1,\,\, \theta_1\phi_2 ,\,\, \theta_1\phi_3 
,\,\, \theta_2\phi_1 
,\,\, \theta_2\phi_2\sigma_1,\,\, \theta_2\phi_2\sigma_2 ,\,\, \theta_2\phi_2\sigma_3 
,\,\, \theta_2\phi_3   
\}.
\end{equation} 
The primary decomposition of the corresponding square-free monomial ideal is 
\begin{align}
\ideal{\supp(\cT)} & = 
\ideal{\phi_1,\phi_2,\phi_3} \cap \ideal{\theta_1,\theta_2} \cap \ideal{\phi_1,\phi_3,\theta_1,\sigma_1,\sigma_2,\sigma_3}.
\end{align}
Therefore, by \cref{thm:minimal}, there are three different sets of possible root labels
for a staged tree with
interpolating polynomial $\cT$. 
We show in \cref{ex_alg} below that the polynomial equivalence class of $\cT$ is given by just two trees. 
\end{example} 

\begin{example}\label{ex:non-square-free}
Consider the interpolating polynomial ${\cT=\theta_0+\theta_1\phi_1+\theta_1\phi_0\phi_1+\theta_1\phi_0^2}$. 
The minimal prime decomposition  of $\ideal{\supp(\cT)}$ is given by two sets, namely
$\ideal{ \theta_0,\theta_1 }$ and $\ideal{ \phi_0, \theta_0, \phi_1}$. 
The first one leads to the tree in \cref{stab:non-square-free}. It can be shown by exhaustive search that the second does not give the labels of a root floret in a labeled event tree. 

\begin{figure}[t]
\centering
\includegraphics[scale=1]{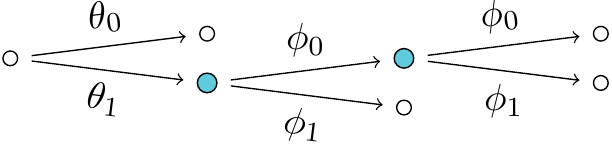}
\caption{A tree whose interpolating polynomial has a non-square-free
  atomic monomial. See \cref{ex:non-square-free}.} 
\label{stab:non-square-free}
\end{figure} 
\end{example}

The key assumption in \cref{thm:minimal} is that the input tree $\T$ is staged, otherwise the result need not be true.

This theorem is central to the algorithm we present in the following section because it shows that
instead of searching for root-floret labels among all subsets of labels $\Theta$, 
the search can be limited 
to those subsets which are the generators of the minimal primes
of $\ideal{\supp(\cT)}$. 
If $\Theta$ has $d$ elements,
their number is bounded above by
$d\choose{\lceil{d/2}\rceil}$
whereas the number of the subsets of $\Theta$ is $2^d$.
 So considering all possible subsets
of $\Theta$, and having to repeat this recursively, may lead to a
combinatorial explosion of cases to analyze. As a consequence, \cref{thm:minimal} gives a drastic reduction of the set of candidate root-floret labels. 
\medskip

Staged trees whose interpolating polynomials are sums of square-free power-products are interesting cases both from an algebraic viewpoint and for their interpretation in statistical inference. For instance, if all power-products in $c_\T$ are square-free then 
the proof of \cref{thm:minimal} can be shortened obtaining the contradiction by \cref{prop:rootfloretlabels} directly.
In terms of staged tree models, this condition implies that if a unit passes through a vertex in a given stage it cannot subsequently pass through another vertex in the same stage. By making this requirement we can avoid various complex ambiguities associated with exactly how we relate a sample distribution to a polynomial family. Although less useful in modeling time series, in most cross-sectional statistical models this constraint will almost always apply.

The restriction to polynomials with square-free support enables us to prove the second and third central result for our algorithmic implementation.

\begin{proposition}[Root-floret labels]\label{prop:rootfloretlabels}
Let $\T$ be a staged tree
whose interpolating polynomial $\cT=\sum_{(v_0,w)\in \floretfun{v_0}} \theta(v_0,w) \cdot c_{\T_w}$ is a sum of square-free power-products.
Then no label in $\theta_{v_0}$ appears in any subtree-polynomial $c_{\T_w}$.
\end{proposition}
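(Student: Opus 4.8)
The plan is to argue by contradiction: suppose some root-floret label $\theta_i\in\theta_{v_0}$ appears in a subtree-polynomial $c_{\T_w}$ for some child $w$ of $v_0$. By \cref{cor:rootexpansion}, $c_\T = \sum_{(v_0,w)\in\floretfun{v_0}} \theta(v_0,w)\cdot c_{\T_w}$, so the product $\theta(v_0,w)\cdot c_{\T_w}$ contributes atomic monomials of $c_\T$, each beginning with the label $\theta(v_0,w)$. If $\theta_i$ occurs inside $c_{\T_w}$, pick an atomic monomial $t$ in $c_{\T_w}$ that is a multiple of $\theta_i$; then $\theta(v_0,w)\cdot t$ is a power-product in $\supp(c_\T)$. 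The key point is that since $\T$ is staged and $\theta_i$ is a floret label of $v_0$, wherever $\theta_i$ reappears it must reappear as part of the \emph{entire} root floret: the stage structure forces the vertex on that edge to be in the same stage as $v_0$, so its florets labels coincide with $\theta_{v_0}$.

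First I would establish this ``whole floret repeats'' step carefully, exactly as in the proof of \cref{thm:minimal}: the edge labeled $\theta_i$ inside $\T_w$ emanates from some vertex $u$ whose floret label set contains $\theta_i$; since $\theta_i\in\theta_{v_0}$ and $\T$ is staged, $\theta_u$ and $\theta_{v_0}$ are not disjoint, hence $\theta_u=\theta_{v_0}$. Next I would track a root-to-leaf path that passes through both the root edge labeled $\theta(v_0,w)$ and then, deeper, the repeated floret; this yields an atomic monomial divisible by $\theta(v_0,w)$ and also passing through the copy of $\theta_{v_0}$ at $u$. Now comes the contradiction with square-freeness: since $\theta(v_0,w)\in\theta_{v_0}=\theta_u$, the vertex $u$ has an emanating edge also labeled $\theta(v_0,w)$ (here I should be slightly careful — if $w$ itself equals one of the $\theta_i$'s distinct from $\theta(v_0,w)$, I instead chase the particular label $\theta_i$; but the cleanest route is: the repeated floret $\theta_{v_0}$ at $u$ contains \emph{all} of $\theta_1,\dots,\theta_s$, in particular it contains $\theta(v_0,w)$, so there is a root-to-leaf path using the root edge $\theta(v_0,w)$ and later the edge labeled $\theta(v_0,w)$ out of $u$), giving an atomic monomial divisible by $\theta(v_0,w)^2$ — contradicting the assumption that $c_\T$ is a sum of square-free power-products.

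The main obstacle I anticipate is bookkeeping about \emph{which} label gets squared. It is not immediate that the offending reappearing label $\theta_i$ is the one on the edge $(v_0,w)$; rather, one must use that the whole stage of $v_0$ reappears, and then select the specific member of that stage — namely $\theta(v_0,w)$ — whose reappearance produces the square. Once that selection is made the contradiction is a one-line consequence of square-freeness, and unlike \cref{thm:minimal} no infinite descent is needed: a single repetition already suffices. I would also remark that this is precisely the shortcut alluded to just before the statement, where the proof of \cref{thm:minimal} is said to follow directly from \cref{prop:rootfloretlabels} in the square-free case. Finally, I would note the result extends to the non-repeated-root-labels conclusion already recorded in \cref{cor:rootexpansion}, so the square-free hypothesis is exactly what upgrades ``root labels not repeated'' (a hypothesis there) to an automatic consequence.
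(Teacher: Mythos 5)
Your proof is correct and follows essentially the same route as the paper's: both use the staged property to force the entire root floret $\theta_{v_0}$ to reappear at some vertex $u$ inside $\T_w$, and then exhibit a root-to-leaf path containing two edges labeled $\theta(v_0,w)$, contradicting square-freeness. Your extra bookkeeping about \emph{which} label gets squared merely makes explicit a step the paper leaves terse; the argument is the same.
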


\begin{proof} 
Because $\T$ is a staged tree we have $\theta_{v_0} \cap
\theta_v=\emptyset$ or $\theta_{v_0} = \theta_v$ for all $v\in
V{\setminus} \{ v_0\}$ by \cref{def:interpol}. 
By contradiction, suppose there is a subtree $\T_w$ containing 
a floret with labels $\theta_{v_0}$.
Let $\theta_1$ be the label of the edge $(v_0,w)$ for some $w\in V$. Then there is a root-to-leaf path with at least two edges labeled $\theta_1$: see also the illustration in the proof of \cref{thm:minimal}.
Hence there is a multiple of
$\theta_1^2$ in $\cT$. This is a contradiction because $\cT$ is a sum of square-free
power-products.
So there is no subtree~$\T_w$ containing 
a floret with labels~$\theta_{v_0}$.
The claim follows from \cref{cor:rootexpansion}. 
\end{proof} 

\begin{corollary}\label{cor:coeff1}
Let $\T$ be a staged tree
whose interpolating polynomial $c_\T$ is a sum of square-free
power-products.
Then all coefficients in $c_\T$ are equal to~1.
\end{corollary}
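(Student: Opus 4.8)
The plan is to argue by induction on the depth of $\T$, i.e. the number of edges in a longest root-to-leaf path, mirroring the recursion in \cref{prop:nesting1}. If $\T$ has depth $0$ then $c_\T = 1$ and its unique coefficient equals $1$. For the inductive step, write
\[
c_\T = \summ{(v_0,w)\in\floretfun{v_0}} \theta(v_0,w)\cdot c_{\T_w}
\]
as in \cref{cor:rootexpansion}. Each subtree $\T_w$ is again a staged tree, since the defining floret condition in \cref{def:treestaged} is inherited by any subset of vertices; and each power-product of $c_{\T_w}$ divides a power-product of $c_\T$ (up to the factor $\theta(v_0,w)$), hence is square-free because divisors of square-free monomials are square-free. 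So the inductive hypothesis applies to every $\T_w$, giving that all coefficients of $c_{\T_w}$ are equal to $1$.

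It then remains to show that no merging of monomials occurs when the branches are summed, so that the coefficients of $c_\T$ are exactly those of the $c_{\T_w}$. Within a single branch, multiplication by the indeterminate $\theta(v_0,w)$ is injective on monomials, so $\theta(v_0,w)\cdot c_{\T_w}$ again has all coefficients equal to $1$. Across two distinct branches $w\neq w'$, set $\theta_i=\theta(v_0,w)$ and $\theta_j=\theta(v_0,w')$; these are distinct labels because floret labels are distinct by \cref{def:tree}. By \cref{prop:rootfloretlabels}, the label $\theta_j$ does not occur in $c_{\T_w}$, so $\theta_j$ divides every monomial $\theta_j m'$ of the $w'$-branch but divides no monomial $\theta_i m$ of the $w$-branch; hence the monomials of the two branches are pairwise distinct. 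Therefore $\supp(c_\T)$ is the disjoint union of the sets $\theta(v_0,w)\cdot\supp(c_{\T_w})$, every monomial of $c_\T$ inherits its coefficient from exactly one summand, and that coefficient is $1$. This completes the induction.

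The only delicate point is the disjointness-of-branches step, and this is exactly where the square-free hypothesis enters, through \cref{prop:rootfloretlabels}: without it a root-floret label could reappear deeper in a sibling subtree, two distinct root-to-leaf paths could carry the same atomic monomial, and a coefficient larger than $1$ could arise, as the coefficient $2$ in \cref{ex:coeff2} illustrates for a (non-staged) labeled event tree. I expect no other genuine obstacle; the remaining steps are the routine bookkeeping of the induction together with the observation that square-freeness and the staged property both pass to subtrees.
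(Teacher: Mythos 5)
Your proof is correct and follows essentially the same route as the paper: the paper's own argument is precisely \enquote{apply \cref{prop:rootfloretlabels} recursively to subtrees}, and your induction on depth with the disjointness-of-branch-supports step is just that argument written out in full detail. The only quibble is in your closing remark: \cref{ex:coeff2} has square-free support, so the coefficient $2$ there illustrates the failure of the \emph{staged} hypothesis rather than of square-freeness (both hypotheses enter through \cref{prop:rootfloretlabels}), but this does not affect the validity of the proof itself.
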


\begin{proof}
The claim follows from \cref{prop:rootfloretlabels} and its 
recursive application to subtrees of~$\T$.
\end{proof}

So when searching for staged trees using square-free interpolating polynomials,
coefficients might be ignored. This is not true for labeled event trees by \cref{ex:coeff2}.
In \cref{sub:discussion} we will see that this result will allow the application of the algorithm in \cref{sect:algorithm} to network polynomials of staged trees.

\FloatBarrier
\subsection{The algorithm \texttt{StagedTrees}}\label{sect:algorithm}

Given a polynomial $f$ whose power-products are square-free and
with coefficients all equal to one, 
there is an obvious algorithm which determines all its nested representations, and 
in particular all staged trees for which $f$ is the interpolating polynomial. This algorithm is here called \texttt{StagedTrees} and is given in pseudo-code in \cref{alg:factorisation}.
Following the notation in \cref{def:nestedpoly}, 
the proposed algorithm searches over subsets $A\subseteq \Theta$ of the indeterminates
appearing in $f$ and recursively checks whether it is possible to construct the polynomials
$f_x$ for $x\in A$. The choices of $A$ are hereby constrained to the minimal primes of the monomial ideal associated to $f$ as determined by \cref{thm:minimal}.
This algorithm works even when it is not known \textit{a priori} whether or not $f$ is the interpolating polynomial of a staged tree. Since the support of $f$ is finite it is clear that the recursion terminates.
The function \texttt{StagedTrees}
is part of the \texttt{CoCoA} distribution from version~5.1.6
({\small
\texttt{http://cocoa.dima.unige.it/download/}\linebreak\texttt{CoCoAManual/html/cmdStagedTrees.html}}).
\medskip
 
\begin{algorithm}
\SetKwInOut{Input}{Input}\SetKwInOut{Output}{Output}
\Input{$C=\supp(f)$ a set of square-free power-products over  a set of indeterminates $\Theta$ for a polynomial $f=\sum_{t\in C}t\in\ZZ[\Theta]$ with all coefficients zero or one.}
\Output{The set $W$ of all staged-trees with interpolating polynomial $f$.}
Let  $W = \emptyset$ (initialise the output set of trees)\\
{\If{$C = \{1\}$}{\nonl \textbf{return} a single-vertex tree}}
{\If{$\#C=1$ has only one element}{\nonl \textbf{return} the emptyset $\emptyset$}}
{\If{$C\subseteq\Theta$ is a subset of indeterminates and $\#C\geq2$ has at least two elements}{\nonl \textbf{return} the staged tree made of the single floret labeled by $\Theta$}}
\nonl\Else{compute the prime decomposition  $\{F_1,\dots,F_k\}$ of the square-free monomial ideal $\ideal{C}$\\
\For{each $i=1,\ldots,k$}{\nonl consider $F_i$ and proceed as follows:\\
\setcounter{AlgoLine}{0}
\SetNlSty{textbf}{6.}{}
\For{each indeterminate $x\in F_i$}{\nonl define $C_x = \{ t \in C \mid t \text{ is a multiple of }x\}$}
\If{there exist $y\ne x$ such that $C_x\cap C_y\ne\emptyset$}{\nonl discard $F_i$ and go to next minimal prime in Step~6}
\For{each indeterminate $x\in F_i$}{
\setcounter{AlgoLine}{0}
\SetNlSty{textbf}{6.3.}{}
\If{$\#C_x=1$ has only one element and this is is not equal to $x$, so $C_x\ne\{x\}$}{\nonl discard $F_i$ and go to next minimal prime in Step~6}
define a set $C'_x = \{\frac{t}{x} \mid t \in C_x\}$ of square-free power-products over $\Theta{\setminus}\{F_i\}$;\\
call \texttt{StagedTrees} recursively with input $C'_x$ and obtain the set $W_x$ of all staged trees with interpolating polynomial $\sum_{t\in C'_x}t$;\\
\If{$W_x=\emptyset$ is the emptyset}{\nonl discard $F_i$ and go to next minimal prime in Step~6}
}
\setcounter{AlgoLine}{3}
\SetNlSty{textbf}{6.}{}
construct the set $W'$ of all trees with root-floret labels $F_i = \{x_1,\dots,x_{r_i}\}$ and\\
     \nonl construct the subtrees $(T_{x_1},\dots,T_{x_{r_i}})\in W_{x_1}\times\dots\times W_{x_{r_i}}$
     where each $T_{x_j}$ is rooted at the second vertex of the edge labeled $x_j$;\\
discard from $W'$ all trees which are not staged;\\
redefine $W$ as $W \cup W'$;
}}
\caption{\texttt{StagedTrees}: Inferring all nested representations of a given polynomial.}\label{alg:factorisation}
\end{algorithm} 
 
The base steps of the recursion in \cref{alg:factorisation} are given by the simplest trees:
a single vertex tree for $C={1}$ (Step~2), 
or a floret without subtrees for $C\subseteq \Theta$ (Step~4)
with at least two edges (Step~3).
Compare also the recursive description in \cref{prop:nesting1}. 
In Step~5, \cref{thm:minimal} is applied to determine 
the candidate root-florets $F_1,\dots,F_k$.
The main loop in Step~6
considers each $F_i$ one at a time, 
and determines all the staged trees
having root floret $F_i$, $i=1,\ldots,k$.

In the main loop, Step~6.2 checks if the subsets defined in Step~6.1 
give a partition for $C$ which is a necessary condition from 
 \cref{prop:rootfloretlabels}:
since $F_i$ is a minimal prime for $\ideal{C}$
it follows that $C = \cup_{x\in F_i}C_x$. Therefore 
only disjointness needs to be verified.
Then the inner loop in Step~6.3, with its sub-steps, 
considers one at a time each $x\in F_i$, 
and determines (if possible) all the subtrees emanating from the
second vertex of the edge labeled $x$.
In particular, Step~6.3.1 stops the search for $F_i$ if there is a single
emanating edge and therefore by definition not an event tree.
Step~6.3.3 makes the recursive call on $C'_x$
(defined in Step~6.3.2) to determine the set $W_x$ of all possible
subtrees from~$x$. 
If $W_x$ is empty then Step~6.3.4  stops the search for~$F_i$.

Concluding the main loop, Step~6.4 is reached if for each edge having a label 
in $F_i$ there is at least one subtree. Then the floret labeled by $F_i$ together with all
combinations of its subtrees make a set $W'$ of event trees,
with root-floret labels $F_i$, whose interpolating polynomial is the sum of the monomials in $C$. 
At this point Step~6.5 discards those which are not staged. 
In particular the subtrees are staged, and compatibility of stages across the
subtrees is checked here, in the obvious way. Finally, Step~6.6 stores them in $W$.

\begin{figure}[t]
\centering
\subfloat[$F_1$ leads to a non-staged tree.]{
\includegraphics[scale=1]{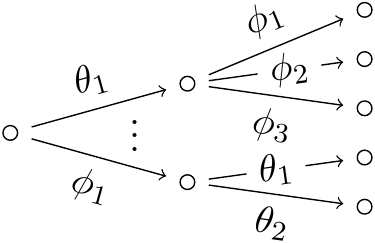} 
\label{f1_ex_alg}}
~
\subfloat[$F_2$: subtrees from $\theta_0$ and $\phi_2$.]{
\includegraphics[scale=1]{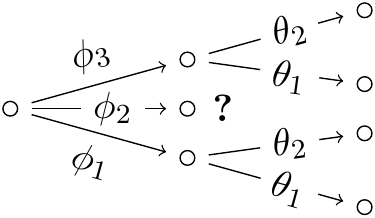}
\label{f2_ex_alg}}
~
\subfloat[$F_2$: subtree from $\phi_1$ is not an event tree.]{
\includegraphics[scale=1]{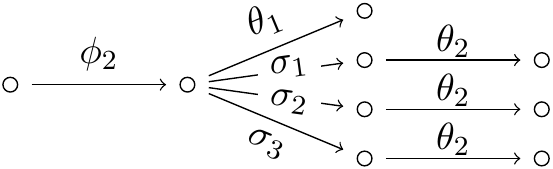}
\label{f3_ex_alg}}
\caption{The working of the {\tt StagedTrees} algorithm.
See \cref{ex_alg}.}
\end{figure}

\begin{example} \label{ex_alg}
We illustrate the working of the \texttt{StagedTrees} algorithm on \cref{ex:running0}.
From \cref{primdecperex} we can consider only three sets of potential root-floret labels of staged trees with interpolating polynomial $\cT$ given in  \cref{eq:exatomic}. These are:
\begin{align} 
F_1 & =\{ \phi_1,\phi_3,\theta_1,\sigma_1,\sigma_2,\sigma_3  \} \\
F_2 & =\{ \phi_1,\phi_2,\phi_3 \} \\
F_3 & =\{ \theta_1, \theta_2 \}  .  
\end{align}
The first set  $F_1$ cannot be a floret-label set 
because $C_{\phi_1}  \cap C_{\theta_1} \ne \emptyset$, 
 see Step 6.2 in the algorithm. 
Indeed the two sets 
\begin{equation}
\begin{array}{lll}
C_{\phi_1} &= \{ \theta_1 \phi_1, \theta_2 \phi_1 \} & = \phi_1 \{  \theta_1, \theta_2 \} \\ 
C_{\theta_1} &= \{ \theta_1 \phi_1, \theta_1 \phi_2, \theta_1 \phi_3 \} & = \theta_1 \{ \phi_1, \phi_2, \phi_3  \} 
\end{array}
\end{equation}
show that, if $F_1$ were a floret-label set, then the tree would include a structure such as in \cref{f1_ex_alg} 
which cannot be part of a staged tree: see also \cref{cor:rootexpansion} and \cref{prop:rootfloretlabels}. Above we have used the convention that the product of a single label with a set of labels is defined as the set of all elementwise products.

With $F_2$ in the first step of the algorithm we have 
\begin{equation}
\begin{array}{lll}
C_{\phi_3} &= \{\phi_3 \theta_1, \phi_3  \theta_2 \} &= \phi_3  \{\theta_1, \theta_2\} \\ 
C_{\phi_2} &= \{ \theta_1 \phi_2, \theta_2 \phi_2 \sigma_1,
             \theta_2 \phi_2 \sigma_2, \theta_2 \phi_2 \sigma_3\}  
                  &= \phi_2 \{ \theta_1 , \theta_2  \sigma_1, 
                  \theta_2 \sigma_2, \theta_2  \sigma_3  \}  \\
C_{\phi_1} &= \{ \theta_1 \phi_1, \theta_2 \phi_1 \} &= \phi_1 \{  \theta_1, \theta_2 \}
\end{array}
\end{equation}
The algorithm calls recursively
on the sets $C'_{\phi_3}$ and $C'_{\phi_1}$ but
stops immediately (Step~4 in the algorithm)
as summarized in \cref{f2_ex_alg}. 
For the middle branch we need to continue the recursion by working on 
$C'_{\phi_2} $.
The monomial ideal generated by $C'_{\phi_2} $
has the following primary decomposition   
\begin{equation}
\ideal{C'_{\phi_2}}  
=\ideal{\theta_1,\theta_2} \cap \ideal{ \theta_1 ,  \sigma_1,\sigma_2,\sigma_3 }  .  
\end{equation}
Taking $F=\{ \theta_1,\theta_2 \}$ gives the tree in \cref{fig:nestingb}
while $F=\{ \theta_1 , \sigma_1,\sigma_2,\sigma_3 \}$ leads to the
situation in \cref{f3_ex_alg} which does not correspond to an
event tree. 
In conclusion, $F_2$ gives the tree in \cref{fig:nestingb} only. 
The result of the algorithm starting from $F_3$ is analogous and leads to the tree in \cref{fig:nestinga}. 
\end{example}

\FloatBarrier
\subsection{Discussion of the algorithm}\label{sub:discussion}

It was shown in~\cite{meins} that the application of two graphical operators called the \enquote{swap} and \enquote{resize} on a staged tree could be used to traverse a statistical equivalence class. However these authors did not provide an implementation of their graphical methods in algebraic or computational terms.
So \cref{alg:factorisation} fills that gap and enables us to determine the full polynomial equivalence class of a given staged tree. We hereby focus on staged as opposed to labeled event trees because these can always be interpreted as representations of statistical models as in \cref{sect:graphtopoly,sub:equivalence}. Of course our new algorithm can be easily adapted to discover more general representations. We will now discuss some of the properties of this algorithm.
\medskip

First, the \texttt{StagedTrees} algorithm can be modified to work on non-square-free power-products.
For this purpose Step~6.2 must be disabled and all the possible partitions
of $C$ need to be checked, making the algorithm more expensive.
For example, the only minimal prime for the ideal ${\ideal{\supp(\theta_1 + \theta_2  \cdot(\theta_1+\theta_2))}}$ is ${\ideal{\theta_1,\theta_2}}$ which leads to two partitions $\{\theta_1, \theta_1\theta_2\}, \{\theta_2^2\}$, and $\{\theta_1\}, \{\theta_1\theta_2,\theta_2^2\}$.  
Calling the algorithm on the first partition gives no answer because
it leads to a tree which is not an event
tree, whereas the second gives the original nested representation.
Moreover, in this partitioning one also needs to keep track of the
coefficients: as illustrated by the nested representation
$\theta_1 \cdot(\theta_1+\theta_2) + \theta_2  \cdot(\theta_1+\theta_2) = \theta_1^2 + \theta_1\theta_2 + \theta_2^2$.

Second, so far we often emphasized the use of the \emph{interpolating polynomial}
as opposed to the \emph{network polynomial} in
\cref{def:interpol}.
 This was to highlight the \emph{structure}
of the tree, as opposed to the real values associated its root-to-leaf
paths: compare also \cref{def:equivalence}.
However, if $c_{g,\T}$ is the network polynomial associated to a
staged tree $\T$ and its \textit{power-products are square-free}, from
\cref{prop:rootfloretlabels} it follows that the root-to-leaf paths
$\lambda\in \Lambda(\T)$ are labeled by distinct
  monomials.  This means that in the network
polynomial the coefficients $g(\lambda)$ are kept distinct.  In
conclusion, all staged trees with a given network polynomial
$c_{g,\T}$ are found by the algorithm \texttt{StagedTrees} applied to $C =
\supp(c_{g,\T})$.
Afterwards the coefficients $g(\lambda)$ 
can be
associated to the corresponding root-to-leaf paths. 

Third, thanks to the reduction to minimal primes, the algorithm is very fast also for real-world settings. In \cref{sect:CHDS} we will apply \texttt{StagedTrees} to discover the polynomial equivalence class of a staged tree describing a real problem with 24 atomic events. This computation takes much less than a second on a laptop with a 2.4~GHz Intel Core~2 Duo processor. Similarly, it takes 2.3 seconds to compute the 576 staged trees sharing the interpolating polynomial ${(\theta_0+\theta_1)(\phi_1+\phi_2)(\tau_0+\tau_1)(\sigma_0+\sigma_1)}$ representing four independent binary random variables: compare \cref{fig:toricex1}. Computing the polynomial equivalence class of four independent random variables taking three levels each takes significantly longer at 12:23min but produces 55,296 different staged trees, each having 81 atoms. Naturally, the more stage structure there is present the more different polynomially equivalent representations are possible, so the latter two are somewhat extreme cases. On medium-sized real-world applications like the one presented below our computations are very fast. So this algorithm allows us to systematically enumerate and analyze staged trees of the same order or even bigger than the study we will consider.

Fourth, every Bayesian network, context-specific Bayesian network~\citep{boutilier} and object-oriented Bayesian network~\citep{OOBN} can be represented by a staged tree where inner vertices correspond to conditional random variables and the emanating edges correspond to the different states of these variables. Then two vertices are in the same stage if and only if the corresponding  rows of conditional probability tables are identified. 
For instance, the independence model of two binary random variables can be represented by the staged tree depicted in \cref{fig:toricex1}. The complete Bayesian network on two binary random variables can be represented by the staged tree in \cref{fig:toricex2}.
However, staged trees allow for much less symmetric -- and hence more general -- modeling assumptions. In particular, they do not rely on an underlying product-space structure but can express relationships directly in terms of events. So this class of models is much larger than the Bayesian network class and as a consequence the \texttt{StagedTrees} algorithm can be optimized to traverse this wider class as well as the class of Bayesian networks.

So the methodology we developed for the \texttt{StagedTrees} algorithm will serve as a springboard for really fast algorithms to analyze equivalence classes of staged trees and in the future causal discovery algorithms over this class: see also \cref{sect:CHDS}. We illustrate below that these computer algebra analysis enable us to obtain further insights about the properties of the underlying class of statistical models.

\FloatBarrier
\section{Additional properties of interpolating polynomials}
\label{sect:treecomp}

A natural question to ask is whether or not a given polynomial can be
seen to be the interpolating polynomial of an event tree
\emph{without} having to construct a nested representation first.  The
following proposition gives some necessary conditions for a polynomial
to be an interpolating polynomial of a labeled event tree.

Recall that for a power-product 
$\boldtheta^a=\theta_1^{a_1},\ldots,\theta_d^{a_d}$, the degree is the
sum of the exponents, $\deg(\boldtheta^a)=\sum_{j=1}^da_j$, 
and for a polynomial $c= \sum_{i=1}^d \boldtheta^{\alpha_i}$
the degree is $\deg(c) = \max\{\deg(\boldtheta^{\alpha_i})\}$.

\begin{proposition}\label{prop:characterisation}
Let $c(\boldtheta)=\sum_{i=1}^n\boldtheta^{\alpha_i}$ be a  polynomial
with square-free support, 
i.e.~$\alpha_i=(a_{i1},\dots,a_{id})\in\{0,1\}^d$ for all $i=1,\ldots,n$ and some $d\ge1$. 
If there exists a labeled event tree such that $c$ is its interpolating
polynomial then the following conditions hold:
\begin{enumerate}
\item If $c\ne1$ then $d, n\geq 2$ and $d\leq 2n-2$, and $d$ 
$>\deg(c)$.

\item The frequency with which each root label appears in the monomials $\boldtheta^{\alpha_i}$, $i=1,\ldots,n$, is greater than the degree of the monomials in which they appear.

\item If the degree of
$\boldtheta^{\alpha_i}$ is equal to the degree
 of $c$, then there exists $\boldtheta_j$ with $i\not=j$ with the same degree as $\boldtheta^{\alpha_i}$ and the degree of the greatest common divisor of $\boldtheta_j$ and $\boldtheta_i$ is equal to the degree of $c$ minus one.
\item No power-product in the support of $c$ can be a proper multiple of
another.
\end{enumerate}
\end{proposition}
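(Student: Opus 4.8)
The four conditions are all consequences of the recursive structure of an interpolating polynomial as captured by \cref{cor:rootexpansion} and \cref{prop:rootfloretlabels}, combined with the fact that every atomic monomial corresponds to a root-to-leaf path and hence picks up exactly one label from each floret along that path. The plan is to prove the four items essentially independently, the first three by unwinding the root-expansion once (or inductively) and the fourth by a direct path-counting argument.

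For item~1, suppose $c = c_\T$ with $\T$ having at least one edge. Then the root floret has at least two edges, so the root-floret label set $F = \theta_{v_0}$ has $\#F = s \ge 2$; since each atomic monomial is a multiple of exactly one label in $F$ (here using square-free support together with \cref{prop:rootfloretlabels}), we get $n \ge s \ge 2$ and $d \ge s \ge 2$. For the bound $d \le 2n-2$: I would induct on depth. A tree with $n$ root-to-leaf paths has at most $n-1$ internal (non-leaf) vertices — this is the standard fact that in a tree where every internal vertex has $\ge 2$ children, the number of internal vertices is strictly less than the number of leaves — and each internal vertex contributes at least two fresh edge-labels to the pool, but labels may be shared across florets in different stages, so the cleaner route is: the number of distinct labels is at most the number of florets times the max floret size; actually the crisp inductive statement is $d \le 2n - 2$ with base case a single floret ($n$ leaves, $d = n \le 2n-2$ for $n \ge 2$) and inductive step summing $d_i \le 2n_i - 2$ over the $s \ge 2$ subtrees, giving $\sum d_i \le 2\sum n_i - 2s = 2n - 2s$, and adding the $s$ new root-floret labels yields $d \le 2n - 2s + s = 2n - s \le 2n - 2$. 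Finally $d > \deg(c)$: each atomic monomial has degree equal to the length of its root-to-leaf path, and each such path uses labels from distinct florets, so $\deg(\boldtheta^{\alpha_i})$ labels are ``consumed'' along that path, but the tree must branch, so there is always at least one label outside any single path — formally, by \cref{prop:rootfloretlabels} the $s \ge 2$ root labels do not appear in the subtrees, yet only one of them appears in $\boldtheta^{\alpha_i}$, giving a label of $c$ not dividing $\boldtheta^{\alpha_i}$; iterating this down the path shows $d \ge \deg(\boldtheta^{\alpha_i}) + 1$ for every $i$, hence $d > \deg(c)$.

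For item~2, fix a root label $\theta \in \theta_{v_0}$. By \cref{cor:rootexpansion} the monomials divisible by $\theta$ are exactly $\theta \cdot (\text{monomials of } c_{\T_w})$ where $w$ is the head of the edge labeled $\theta$; by \cref{prop:rootfloretlabels}, $\theta$ does not recur in $c_{\T_w}$, so the frequency of $\theta$ in $c$ equals the number of monomials of $c_{\T_w}$, i.e.\ the number of root-to-leaf paths in $\T_w$, which is at least the depth of $\T_w$ plus one (a tree of depth $\delta$ with branching has $\ge \delta + 1$ leaves — again branching forces this), and the degree of each monomial in which $\theta$ appears is $1 + (\text{length of the path's tail in } \T_w) \le 1 + \operatorname{depth}(\T_w)$; comparing gives frequency $\ge \operatorname{depth}(\T_w) + 1 > \operatorname{depth}(\T_w) + 1 - 1 = $ the relevant degree bound — I would phrase this carefully so the inequality comes out strict as claimed. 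Item~3 is the observation that a maximal-degree monomial $\boldtheta^{\alpha_i}$ corresponds to a longest root-to-leaf path $\lambda$; its last edge lies in a floret with $\ge 2$ edges, so there is a sibling path $\lambda'$ agreeing with $\lambda$ except in that last edge, giving $\boldtheta^{\alpha_j}$ of the same (maximal) degree whose gcd with $\boldtheta^{\alpha_i}$ is the product of the shared labels along the common prefix, of degree $\deg(c) - 1$.

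For item~4, suppose $\boldtheta^{\alpha_i}$ is a proper multiple of $\boldtheta^{\alpha_j}$, i.e.\ the support of $\alpha_j$ is a proper subset of that of $\alpha_i$. These correspond to root-to-leaf paths $\lambda_i, \lambda_j$ in $\T$ whose edge-label sets satisfy (label set of $\lambda_j$) $\subsetneq$ (label set of $\lambda_i$). I would argue that $\lambda_i$ must pass through a vertex that is a strict ancestor of $\lambda_j$'s leaf along $\lambda_j$, and derive a contradiction with the tree (acyclic, and each internal vertex's outgoing edges carry distinct labels, and — crucially — a root-to-leaf path ends at a leaf, so it cannot be a proper prefix of anything): since $\lambda_j$ is itself a root-to-leaf path, every edge-label of $\lambda_j$ occurs in $\lambda_i$, and by matching florets from the root downward (the first edges of both paths share the root floret, and square-freeness plus the staged/path structure forces the labels to be read off consistently) one shows $\lambda_j$ is an initial segment of $\lambda_i$, contradicting that $\lambda_j$ already terminates at a leaf while $\lambda_i$ continues.

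\textbf{Main obstacle.} The genuinely delicate point is item~4 and, to a lesser extent, the strict inequality in item~2. For item~4 the subtlety is that labels can repeat across florets in different stages, so ``the label set of $\lambda_j$ is contained in that of $\lambda_i$'' does not immediately pin down which physical edges are used; I expect to need square-freeness (no label repeats along a single path) together with a careful induction down the tree — matching $\lambda_i$ and $\lambda_j$ floret by floret and showing at each step that they must traverse the same edge — to force $\lambda_j$ to be a prefix of $\lambda_i$. Making the ``branching forces $\ge \delta+1$ leaves'' bookkeeping yield exactly the strict inequalities stated in items~1 and~2, rather than something off by one, is the other place where care is needed.
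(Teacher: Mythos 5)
Your proposal takes essentially the same route as the paper on all four items: the same edge/leaf count for $d\le 2n-2$ (you induct over the root's subtrees, the paper grows the tree one floret at a time; the arithmetic is identical), the same bifurcation count for item~2, the same sibling-at-a-leaf-floret argument for item~3, and the same root-edge-matching recursion via \cref{prop:rootfloretlabels} for item~4. Your treatment of $d>\deg(c)$ is the only piece the paper's proof omits outright, and it is fine for staged trees.

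The one step you flag as delicate --- getting item~2 to come out \emph{strict} --- is a genuine problem, and you are right not to claim it: your own bookkeeping yields frequency $\ge \delta+1$ and maximal degree $=\delta+1$, where $\delta$ is the depth of the subtree below the root edge, i.e.\ only a non-strict inequality. The paper's proof has the same off-by-one slip: it counts the $l+1$ root-to-leaf paths obtained by branching off a length-$l$ path at each of its $l$ non-leaf vertices, but the branches taken at the root itself need not pass through the root edge carrying the label in question, so only $l$ of those paths are guaranteed to contribute to that label's frequency. Indeed a caterpillar tree with root floret $\{\theta_1,\theta_2\}$, a leaf below $\theta_2$, and a depth-two binary chain below $\theta_1$ has interpolating polynomial $\theta_2+\theta_1\phi_1+\theta_1\phi_2\psi_1+\theta_1\phi_2\psi_2$: the root label $\theta_1$ has frequency $3$ yet appears in a monomial of degree $3$. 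So the strict inequality fails as literally stated (it holds with $\ge$, or with ``degree'' read as the degree of the monomial's tail inside the subtree); your non-strict version is what the argument actually delivers, and no amount of careful phrasing will recover the strict one.
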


\begin{proof}
\begin{enumerate}
\item
The root floret of a labeled event tree with at least one edge has at
least two edges with distinct labels, thus $d, n\geq 2$.  
We prove the claim by induction on the number of florets in a labeled
event tree. 
Let $E$ be the set of edges and $L$ the set of leaves of the tree.
If a tree is formed by a single vertex then $\#E=0$ and $\#L=1$.
Therefore $\#E=0 = 2\#L{-}2$.  By induction suppose that 
$\#E \le 2\#L-2$ for the tree $\T$. Consider the tree $\T'$ obtained by adding to a leaf in $\T$ a
floret with $s$ edges. Because $s\ge 2$, thus $s\le 2s{-}2$ and hence  
$\#E'=\#E{+}s$ and $\#L'=\#L {+} s{-}1$.
As a result,  $\#E' = (\#E)+(s) \le (2\#L {-}2)+(2s{-}2) = 2(\#L+s{-}1) -2 = 2\#L'{-}2$.
We conclude by noticing that $d\le \#E$ and $n=\#L$.

\item
 Consider \cref{fig:propbound}. In labeled event trees, an atomic monomial of degree $l\in\mathbb{N}$ is associated to a root-to-leaf path of length $l$. This path has one bifurcation at every vertex, so is embedded in a graph with at least $l+1$  distinct root-to-leaf paths. So every root-label $\theta_1$ occurs in monomials of maximal degree $l$ and there are at least $l+1$ of those.

\item
 Because $\#\floretfun{v}\geq 2$ for all $v\in V$, every leaf-floret has two edges. There are hence at least two monomials of the same maximal degree, namely those belonging to the longest paths in the tree: these are equal until they split at a leaf-floret.
\begin{figure}[t]
\centering
\includegraphics[scale=1]{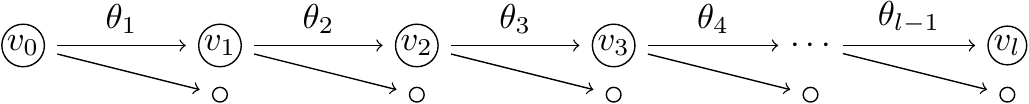}
\caption{A root-to-leaf path $\lambda=(e_1,\ldots,e_l)$ in an event tree.}\label{fig:propbound}
\end{figure}

\item
  Let $t_1$ and $t_2$ in $c$ be multiples of each other, written as
  $t_1|t_2$.  They are atomic monomials of two root-to-leaf paths,
  $\lambda_1$ and $\lambda_2$, which are not empty if~$\T$ is not
  trivial. Let $e$ be the root edge
 labeled $\theta_1$, the first edge in $\lambda_1$.
Then $\lambda_2$ starts with the same edge: otherwise
 $\theta_1| t_1$,  and $\theta_1\not|\, t_2$
 for \cref{prop:rootfloretlabels}.
Therefore 
we can repeat the reasoning on $\lambda_1{{\setminus}}\{e\}$ and
$\lambda_2{{\setminus}}\{e\}$ in the subtree $\T(w)$. 
After a finite number of steps we can then conclude $\lambda_1=\lambda_2$ and
thus $t_1=t_2$.
\end{enumerate}
\end{proof}

The conditions in \cref{prop:characterisation} are necessary but not sufficient.

\begin{example} \label{ex:propertiesarenotsufficient}
The polynomial $\theta_1 \phi_1+\theta_1 \phi_2+\theta_2 \theta_3 \theta_4+\theta_2 \theta_3 \phi_1+\theta_2 \theta_4 \phi_2$ satisfies all points  in \cref{prop:characterisation}. However, it cannot be written in the form of a nested representation. It is thus not the interpolating polynomial of a labeled event tree.
\end{example} 

\FloatBarrier
\section{Two other representations of labeled event trees}

From the previous section we see that if there is a labeled event tree for a square-free polynomial $c$ with $n$ terms then that tree has $n$ root-to-leaf paths. 
Every such path is labeled by a monomial $\boldtheta^\alpha$ which is a power-product in $\supp(c)$. 
We next present two well-known alternative representations of these atomic monomials of a staged tree. 

The first representation is based  on the notion of an abstract simplicial complex, i.e.\,a family $\G$ of subsets of a finite set (the nodes of the simplicial complex) such that if $A\in \G$ and $B\subseteq A$ then $B\in \G$.
In our case the nodes of the simplicial complex are the labels $\Theta$ of a labeled event tree $\T=\pt$ and the family is given by the monomials $\pi_\theta(\lambda_i)=\boldtheta^{\alpha_i}$, $i=1,\ldots,n$, and all of their divisors. For an illustration see \cref{fig:simplicialcomplex}.
This graphical representation for a set of monomials has been successfully used in the data analysis of complex systems~\citep{Carlsson2009,PirinoetAl2015,DonatoetAl2016}.

\begin{proposition} A labeled event tree 
$\T$ is saturated with  root labels $\theta_1,\ldots,\theta_k$ if and only if its associated simplicial complex $\G=\G_1\oplus\G_2\oplus\ldots\oplus\G_k$ is the disjoint union of $k$ connected simplicial complicies and the vertex of maximal degree within each complex is a root-label.
\end{proposition}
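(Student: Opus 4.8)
The plan is to prove both directions by induction on the depth of $\T$ (the one‑vertex tree, which has no labels, being a vacuous base case), with everything hinging on one structural observation I would establish first. Write $\theta_1,\dots,\theta_k$ for the labels of the root floret $\floretfun{v_0}$ and, for each $i$, let $\T_{w_i}$ be the subtree rooted at the endpoint of the edge labeled $\theta_i$. Since every root‑to‑leaf path of $\T$ starts with exactly one root edge, $\Lambda(\T)$ splits into the paths through the $\theta_i$‑edge, and each of these is $\theta_i$ followed by some $\lambda'\in\Lambda(\T_{w_i})$ with atomic monomial $\theta_i\cdot\pi_\theta(\lambda')$. The first thing to record is therefore that the subcomplex $\G_i$ of $\G$ generated by the monomials through the $\theta_i$‑edge (and all their divisors) is the \emph{cone with apex $\theta_i$} over the simplicial complex $\G(\T_{w_i})$ of $\T_{w_i}$: its faces are the faces $F$ of $\G(\T_{w_i})$ together with all the sets $F\cup\{\theta_i\}$. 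Hence $\G_i$ is automatically connected, $\theta_i$ is adjacent in the $1$‑skeleton to every other vertex of $\G_i$, and $\deg_{\G_i}(\theta_i)=\#V(\G_i)-1$ is the largest degree attainable in $\G_i$.

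For the \emph{only if} direction I would argue as follows. If $\T$ is saturated, its root labels are distinct, the subtrees $\T_{w_i}$ are again saturated, and their label sets are pairwise disjoint and avoid all of $\theta_1,\dots,\theta_k$; so the vertex sets $V(\G_i)=\{\theta_i\}\cup(\text{labels of }\T_{w_i})$ are pairwise disjoint and $\G=\G_1\oplus\cdots\oplus\G_k$ is exactly the decomposition of $\G$ into connected components (so $k$ is simultaneously the number of root labels and the number of components). The only nontrivial point left is that each $\theta_i$ is the \emph{unique} maximal‑degree vertex of $\G_i$. If $w_i$ is a leaf this is trivial, $\G_i$ being the single point $\theta_i$; otherwise the root floret of $\T_{w_i}$ has at least two edges, so by the inductive hypothesis $\G(\T_{w_i})$ is a disjoint union of at least two nonempty complexes and is in particular disconnected, so no vertex $u$ of $\G(\T_{w_i})$ can be adjacent to all the others there, which forces $\deg_{\G_i}(u)=\deg_{\G(\T_{w_i})}(u)+1\le\#V(\G_i)-2<\deg_{\G_i}(\theta_i)$. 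That finishes this direction.

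For the \emph{if} direction, starting from $\G=\G_1\oplus\cdots\oplus\G_k$ (connected components), with $v_i$ the unique top‑degree vertex of $\G_i$ and, matching the index $k$ on both sides of the equivalence, $\{v_1,\dots,v_k\}$ exactly the set of root labels of $\T$, I would first pin down each subtree: any $\lambda\in\Lambda(\T)$ runs through the edge labeled $v_j$ for a unique $j$, so its monomial contains $v_j$ and hence lies in the component $\G_j$; as the components have disjoint vertex sets, the paths through the $v_i$‑edge are precisely those with monomial in $\G_i$, so $V(\G_i)=\{v_i\}\cup(\text{labels of }\T_{w_i})$ and $\G_i=v_i\ast\G(\T_{w_i})$ — equivalently $\G(\T_{w_i})$ is the link $\operatorname{lk}_{v_i}(\G_i)$. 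If $\G(\T_{w_i})$ again admitted a decomposition into connected components whose unique top‑degree vertices are the root labels of $\T_{w_i}$, the inductive hypothesis would make each $\T_{w_i}$ saturated, and then the pairwise disjointness of $V(\G_1),\dots,V(\G_k)$ would rule out any label being repeated across distinct subtrees, so $\T$ would be saturated too.

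The step I expect to be the main obstacle is exactly this descent in the \emph{if} direction: the stated hypothesis on $\G$ need not survive passing to the links $\G(\T_{w_i})$. A cautionary example: the staged tree with root floret $\{a,b\}$, with $a$ leading to a floret $\{c,d\}$ and then $c\to\{e,f\}$, $d\to\{e,f\}$, and with $b$ leading to $\{g,h\}$, has complex $\G_1\oplus\G_2$ with $\G_1,\G_2$ connected, with $a,b$ the unique top‑degree vertices and the only root labels — yet it is not saturated, the obstruction being that $\operatorname{lk}_{a}(\G_1)$, with facets $\{c,e\},\{c,f\},\{d,e\},\{d,f\}$, is connected although it carries two root labels. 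So to make the induction close one has to read the hypothesis recursively — $\G$ is built up from single points by repeatedly taking cones over disjoint unions, the apex at each stage being a root‑label — and once that is granted, the identity $\G(\T_{w_i})=\operatorname{lk}_{v_i}(\G_i)$ supplies precisely such a recursive decomposition for every subtree and the argument mirrors the \emph{only if} direction. The \emph{only if} direction itself needs no such strengthening.
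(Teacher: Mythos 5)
Your \emph{only if} direction is correct and is in fact tighter than the paper's own argument. The paper claims that a second vertex of maximal degree in $\G_i$ \enquote{would divide every monomial in that subset}, which does not follow from a degree count in the $1$-skeleton; your route --- $\G_i$ is the cone with apex $\theta_i$ over the complex of the subtree, and that subtree complex is (by induction) a disjoint union of at least two components whenever the subtree is not a single vertex, so no non-apex vertex can be adjacent to everything --- closes this cleanly.

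The substantive point is your treatment of the \emph{if} direction, and your diagnosis is correct: the tree with atomic monomials $ace,\,acf,\,ade,\,adf,\,bg,\,bh$ is a genuine counterexample to the converse as literally stated. Its complex splits into two connected components, the unique top-degree vertices are $a$ (degree $4$ against $3$ for $c,d,e,f$) and $b$ (degree $2$ against $1$ for $g,h$), and these are exactly the root labels; yet the tree is not saturated, since $e$ and $f$ each label two edges. So this is not merely an obstruction to your inductive strategy --- it refutes the stated equivalence. The paper's proof of this direction contains exactly the leap you flag: it establishes only that the subtrees emanating from the root have pairwise disjoint label sets and then concludes \enquote{therefore the original tree is saturated}, never excluding repeated labels \emph{within} a single subtree. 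Your proposed repair --- imposing the cone-over-disjoint-union structure recursively, with the apex a root label at every level, and descending via $\G(\T_{w_i})=\operatorname{lk}_{v_i}(\G_i)$ --- yields a true statement and a workable induction, but be aware that it is a strictly stronger hypothesis than the one printed, so what you end up proving is a corrected proposition rather than the one in the paper.
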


\begin{proof} 
Let $\T$ be a saturated tree. If no edge labels are
identified, then writing \cref{eq:rootexpansion} as
$c_{\T} = \sum_{i=1}^k \theta_i c_i$
we find that no two $c_{i}$ and $c_{j}$, $i\not=j$,
have any indeterminates in common, $i, j=1,\ldots,k$. Thus, we can split the set of
atomic monomials $\boldtheta^{\alpha_i}$, $i=1,\ldots,n$, into $k$
disjoint sets, each given by the monomial terms in one
$\theta_ic_i$. This gives us the disjoint union
of $\G=\G_1\oplus\G_2\oplus\ldots\oplus\G_k$. By the linear expansion
of the interpolation polynomial,  the vertex $\theta_i$ is connected to every
other monomial in $\G_i$. 
It is thus of highest degree in the sense that it has the highest number of emanating edges. 
For if in $\G_i$ there was a second vertex
$\theta_j$, $i\not=j$, of equally high degree then both $\theta_i$
and $\theta_j$ would divide every monomial in that subset. But by definition a sequence of single
edges, here labeled $\theta_i$ and $\theta_j$, is not possible.

Conversely, assume we have a set of monomials belonging to an event tree. 
Then the associated simplicial complex is the disjoint union of simplicial complicies $\G=\G_1\oplus\G_2\oplus\ldots\oplus\G_k$ where each $\G_i$ has a vertex  $\theta_i$ of highest degree, $i=1,\ldots,k$. Thus, we can write the corresponding interpolating polynomial in the form \cref{eq:rootexpansion}. Because no $\G_i$ is connected to any $\G_j$ for $i\not=j$, the terms belonging to one sub-simplicial complex have no indeterminates in common with those belonging to the other. Thus the subtrees rooted after the root do not have any labels in common. Therefore the original tree is saturated.
\end{proof}

The proposition enables us to use this simplicial complex representation of an interpolating polynomial to quickly decide whether or not the corresponding labeled event tree is saturated. Thus, by \cref{rem:saturated}, we will know whether or not we need to check for different nested representations of its interpolating polynomial, or whether or not any representation that is discovered is unique . If a tree is saturated, we can then resize it to a simpler graphical representation as in \cref{example:minrepre}.
\medskip

\begin{figure}[tb]
\centering
\includegraphics[scale=1]{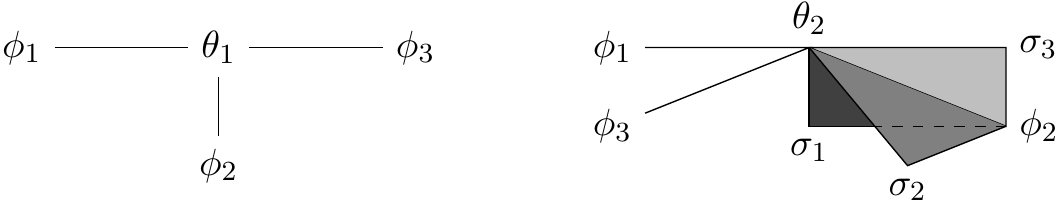}
\caption{The simplicial complex for the staged tree with nested representation ${\cT=\theta_1(\phi_1+\phi_2+\phi_3)+ \theta_2(\phi_1+\phi_2(\sigma_1{+}\sigma_2{+}\sigma_3)+ \phi_3)}$ given in~\cref{eq:nestingbsp1} 
is the direct sum of the two simplicial complexes above. 
The three triangles in the right hand complex with vertices $\theta_2\phi_2 \sigma_i$, $i=1,2,3$, correspond to the root-to-leaf paths of length three. 
}\label{fig:simplicialcomplex} 
\end{figure}

The other natural representation of these monomials is via an incidence matrix. 
Let $\mathcal T=\pt$ be a labeled event tree with monomials  $\theta_{1}^{\alpha_{1,j}}\theta_2^{\alpha_{2,j}}\cdots\theta_d^{\alpha_{d,j}} =\boldtheta^{\alpha_j}$, for $\alpha_j\in\mathbb{Z}_{\geq 0}^d$ and $j=1,\ldots,n$. 
The interpolating polynomial of $\mathcal T$ can be visualized by 
a $d\times n$ matrix $A_{\T}=(a_{ij})_{ij} $ with integer non-negative entries such that
\begin{equation}
a_{ij}=\begin{cases} m \quad&\text{if } \theta_i^m \text{ divides } \boldtheta^{\alpha_j} \text{ and } m\in\mathbb{N} \text{ is maximal}\\
0 &\text{otherwise.}
\end{cases}
\end{equation}
If the atomic monomials in $\mathcal T$ are square-free then $A_{\T}$ is a matrix with entries $0$ or $1$. 
The matrix $A_{\T}$ codes a number of properties of the atomic monomials of $\T$.
In particular, every column encodes those indeterminates which divide the associated monomial, so column sums are the degree of the monomial indexing the column. Every row sum codes the number of monomials which are divided by a certain indeterminate. In order for a set of monomials to be associated to a tree, we need that
\begin{equation}
\summ{i=1,\ldots,d}a_{il}~<~\summ{j=1,\ldots,n}a_{kj}
\end{equation}
for all pairs of $k,l$.  This follows from \cref{prop:characterisation}.2. Submatrices of $A_{\T}$ can easily be associated to subtrees of $\T$. For instance for a subtree $\T_v\subseteq\T$ rooted after an edge $(\cdot,v)$ labeled $\theta_i$, we cancel all rows $a_{i\cdot}$ and all columns $a_{\cdot j}$ from the matrix which include an entry $a_{ij}=0$. The remaining matrix $A_{\T,i}=A_{\T_v}$ is  then the incidence matrix of~$\T_v$.
 
For example, the incidence matrix $A_{\T}$ for the interpolating polynomial $\cT$ in \cref{eq:exatomic} 
of the trees in \cref{fig:nesting} is 
\begin{equation}
\bordermatrix{
& \theta_1 \phi_1 & \theta_1 \phi_2 & \theta_1 \phi_3 &
\theta_2 \phi_1 & \theta_2 \phi_3 &  \theta_2 \phi_2 \sigma_1 &  \theta_2 \phi_2 \sigma_2 &  \theta_2 \phi_2 \sigma_3 \cr
\theta_1& 1 & 1 & 1 & 0 & 0 & 0 & 0 & 0  \cr
\theta_2 & 0 & 0 & 0 & 1 & 1 & 1 & 1 & 1  \cr
\phi_1  & 1 & 0 & 0 & 1 & 0 & 0 & 0 & 0 \cr
\phi_2 & 0 & 1 & 0 & 0 & 0 & 1 & 1 & 1 \cr
\phi_3 & 0 & 0 & 1 & 0 & 1 & 0 & 0 & 0 \cr
\sigma_1 & 0 & 0 & 0 & 0 & 0 & 1 & 0 & 0  \cr
\sigma_2 & 0 & 0 & 0 & 0 & 0 & 0 & 1 & 0 \cr
\sigma_3 & 0 & 0 & 0 & 0 & 0 & 0 & 0 & 1 }
\end{equation}
The sum of the first two rows in this matrix is a vector with all entries equal to one and the labels indexing these first two rows are root-floret labels.  
This is not by chance. In fact, the full tree can be retrieved by splitting the set of columns into those which have one in the first row or in the second row and proceeding recursively. 
 This procedure can be turned into a matrix version of the {\tt StagedTree} algorithm.
 
This matrix representation enables us to link model representations given by labeled or staged trees to log-linear models and well-known results in algebraic stiatistics~\citep{geigermeeksturm:algebra}.

\FloatBarrier
\section{An application}\label{sect:CHDS}

In this section we will apply the algorithm presented in \cref{sect:polytograph} to determine the full polynomial equivalence class of a staged tree representing the best fitting model inferred from a real-world dataset.
The work of~\cite{chds} provides an early analysis of what we will refer to as \enquote{the Christchurch dataset}. These data have been collected on a cohort of nearly one thousand children over the course of thirty years and include measurements of a number of possibly relevant factors to determine the likelihood of child illness. These measurements can be grouped into the very broad categories of socio-economic background and number of life events -- like divorce of its parents or death in the family -- of a child, with respective states \enquote{high}, \enquote{average} and \enquote{low}. The state of health of a child is then assessed as hospital admission \enquote{yes} or \enquote{no}~\cite{lorna:BNceg}.

An MAP algorithm running on the Christchurch
dataset determined the highest scoring staged tree
representation among those which had all vertices that are in the same stage also at the same depth~\cite{cowelljim:causal}. Later,~\cite{meins} found a statistically equivalent but graphically simpler
representation with no saturated subtrees. This staged tree $\pt$ is shown in \cref{fig:chds_resized}.
Here, socio-economic background of a child has been modified to a measure of the access to credit which can be high ($++$), moderately high ($+-$ or $-+$) or low ($--$). The colouring of the staged tree then indicates a number of interesting conditional independence statements. For instance, the red stages on the first level of the tree state that the likelihood of hospital admission was inferred to be the same for all children from a family with high or moderately high access to credit. The blue stages on the subsequent level add that the number of life events of a child is independent of it being admitted to hospital given that its family's access to credit was high, but different given that its access to credit was low. From the green stages we can see that for children with moderate access to credit the likelihood of a certain quantity of life events is not independent of admission to hospital.

The order of events depicted by the staged tree in \cref{fig:chds_resized} suggests that the number of life events of a child might be a putative cause of its admission to hospital. The analysis of~\cite{cowelljim:causal,meins} then showed that in fact when keeping the original problem variables intact across the class of staged trees which are statistically equivalent to $\pt$, this order is preserved. This interpretation of the tree's directionality thus seems to be supported by the Christchurch data.
\medskip

\begin{figure}
\centering
\subfloat[The original staged tree $\pt$ from Fig.~4(b) in~\cite{meins}. See \cref{eq:equiv0}.]{
\includegraphics[scale=1]{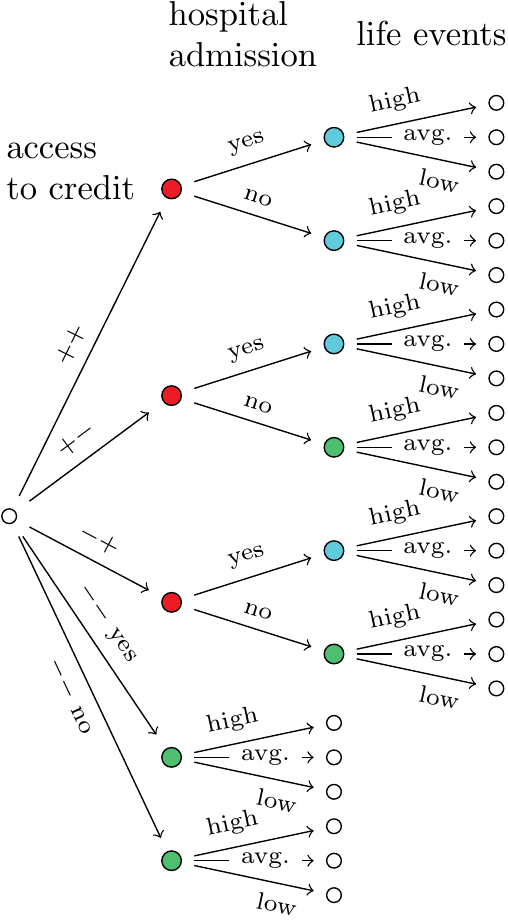}
\label{fig:chds_resized}}
\qquad
\subfloat[A staged tree $\pt_1$ with nested {representation}~\cref{eq:equiv1}.]{
\includegraphics[scale=1]{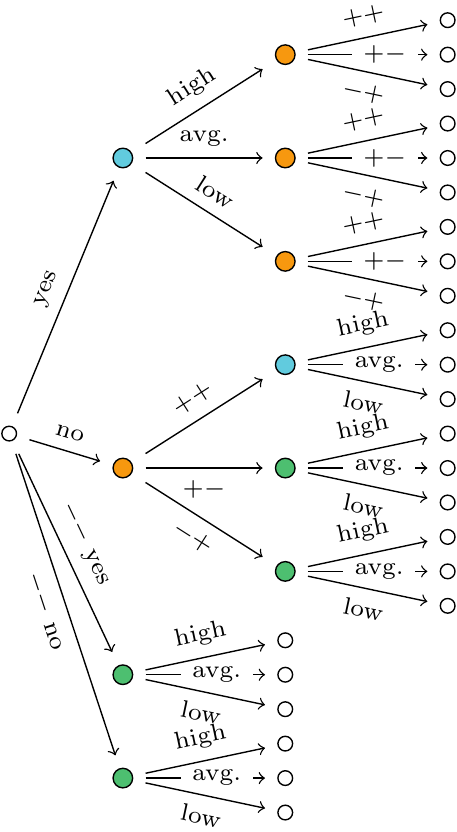}
\label{fig:chds_equiv1}}

\subfloat[A staged tree $\pt_2$ with nested {representation}~\cref{eq:equiv2}.]{
\includegraphics[scale=1]{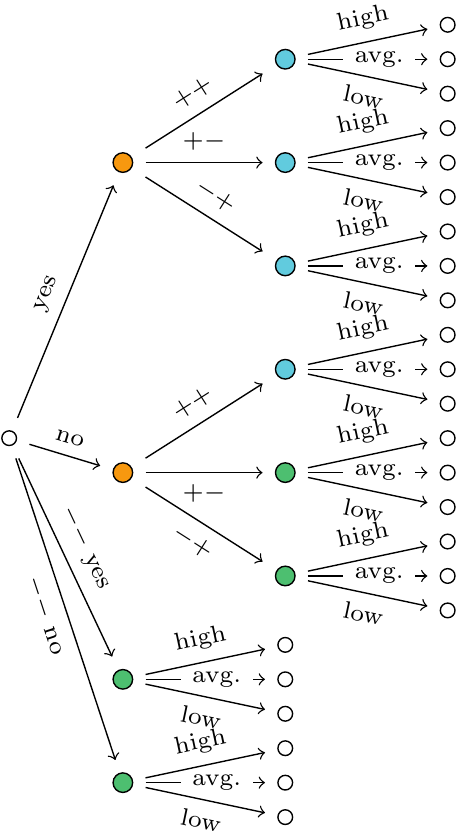}
\label{fig:chds_equiv2}}
\qquad
\subfloat[A staged tree $\pt_3$ with nested {representation}~\cref{eq:equiv3}.]{
\includegraphics[scale=1]{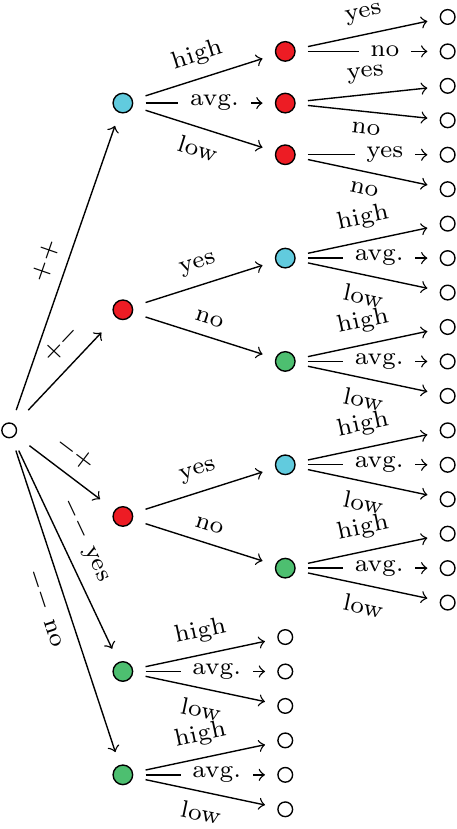}
\label{fig:chds_equiv3}}
\caption{All four elements of the polynomial equivalence class
of $c_\T$ in \cref{eq:chds}.}
\label{fig:chds}
\end{figure}

\begin{table}[t]
\centering
\begin{tabular}{l l l}
stage colour & label & interpretation\\
\hline
 & $(a_1,a_2,a_3,a_4,a_5)$ & access to credit: $++,\ldots,--$\\
\cellcolor{Red} & $(h_1,h_2)$ & hospital admission: yes or no\\
\cellcolor{SkyBlue} & $(l_1,l_2,l_3)$ & number of life events: high, average or low\\
\cellcolor{Green!80} & $(l_3,l_4,l_5)$ & number of life events: high, average or low\\
\cellcolor{YellowOrange} & $(a_1,a_2,a_3)$ & access to credit: $++,+-,-+$
\end{tabular}
\caption{The labels of the staged trees in \cref{fig:chds}, used in the interpolating polynomial~\cref{eq:chds}.}\label{tbl:chds}
\end{table}

We will now use the algorithm \texttt{StagedTrees} in \cref{sect:algorithm} to automatically determine the polynomial equivalence class of $\T=\pt$. To this end we first specify the interpolating polynomial for the tree in \cref{fig:chds_resized}, using labels as specified in \cref{tbl:chds}:
\begin{equation}\label{eq:chds}
\begin{split}
\cT(\boldsymbol{a},\boldsymbol{h},\boldsymbol{l})
~=~&~a_1h_1l_1+a_1h_1l_2+a_1h_1l_3+a_1h_2l_1+a_1h_2l_2+a_1h_2l_3\\
&+a_2h_1l_1+a_2h_1l_2+a_2h_1l_3+a_2h_2l_4+a_2h_2l_5+a_2h_2l_6\\
&+a_3h_1l_1+a_3h_1l_2+a_3h_1l_3+a_3h_2l_4+a_3h_2l_5+a_3h_2l_6\\
&+a_4l_4+a_4l_5+a_4l_6+a_5l_4+a_5l_5+a_5l_6
\end{split}
\end{equation}
where $\boldsymbol{a}=(a_1,a_2,a_3,a_4,a_5)$, $\boldsymbol{h}=(h_1,h_2)$ and $\boldsymbol{l}=(l_1,l_2,l_3,l_4,l_5,l_6)$ are the respective (conditional) probabilities of different degress of \textbf{a}ccess to credit, \textbf{h}ospital admission and numbers of \textbf{l}ife events, read from left to right and from top to bottom along the root-to-leaf paths of $\T$.

Running \texttt{StagedTrees},
we find precisely four different nested {representation}s of $\cT$. These are:
\begingroup
\allowdisplaybreaks
\begin{align}
r_0(\cT)~=~&~
a_1(h_1(l_1+l_2+l_3)+h_2(l_1+l_2+l_3))\label{eq:equiv0}\\
&+a_2(h_1(l_1+l_2+l_3)+h_2(l_4+l_5+l_6))\notag\\
&+a_3(h_1(l_1+l_2+l_3)+h_2(l_4+l_5+l_6))\notag\\
&+a_4(l_4+l_5+l_6)+a_5(l_4+l_5+l_6)\notag\\
r_1(\cT)~=~&~
h_1(l_1(a_1+a_2+a_3)+l_2(a_1+a_2+a_3)+l_3(a_1+a_2+a_3)) \label{eq:equiv1}\\
&+h_2(a_1(l_1+l_2+l_3)+a_2(l_3+l_4+l_5)+a_3(l_3+l_4+l_5))\notag\\
&+a_4(l_4+l_5+l_6)+a_5(l_4+l_5+l_6)\notag\\
r_2(\cT)~=~&~
h_1(a_1(l_1+l_2+l_3)+a_2(l_1+l_2+l_3)+a_3(l_1+l_2+l_3)) \label{eq:equiv2}\\
&+h_2(a_1(l_1+l_2+l_3)+a_2(l_3+l_4+l_5)+a_3(l_3+l_4+l_5))\notag\\
&+a_4(l_4+l_5+l_6)+a_5(l_4+l_5+l_6)\notag\\
r_3(\cT)~=~&~
a_1(l_1(h_1+h_2)+l_2(h_1+h_2)+l_3(h_1+h_2)) \label{eq:equiv3}\\
&+a_2(h_1(l_1+l_2+l_3)+h_2(l_4+l_5+l_6))\notag\\
&+a_3(h_1(l_1+l_2+l_3)+h_2(l_4+l_5+l_6))\notag\\
&+a_4(l_4+l_5+l_6)+a_5(l_4+l_5+l_6)\notag
\end{align}%
\endgroup
where for now $r_i$ denotes one fixed order of summation in a nested representation, $i=0,1,2,3$.

By \cref{prop:nesting2}, $r_0(\cT)$ is the nested factorisation of $\pt$. In \cref{fig:chds_equiv1} we  have drawn the staged tree $\pt_1$ corresponding to the representation $r_1(\cT)$, in \cref{fig:chds_equiv2} the staged tree $\pt_2$ corresponding to $r_2(\cT)$ and in \cref{fig:chds_equiv3} the staged tree $\pt_3$ corresponding to $r_3(\cT)$. These staged trees are the only labeled event trees with the above interpolating polynomial on which \mbox{sum-to-$1$} conditions imposed on florets induce a probability distribution over the depicted atoms. So in \cref{fig:chds} we see all four elements of the polynomial equivalence class of $\pt$. By \cref{def:equivalence}, these staged trees all represent the same underlying model. So we can now analyse the orders in which the same events are depicted across different graphs.

Because in \cref{fig:chds_resized} and~\ref{fig:chds_equiv2} all vertices in the same stage are also at the same distance from the leaves, we can in this case assign an interpretation to each such level of the tree. So in \cref{fig:chds_resized} the first level of $\pt$ depicts all states of the random variables access to credit, the second level depicts all states of the random variable hospital admission and the third and last level depicts all states of the random variable life events. Now this interpretation has been reversed in \cref{fig:chds_equiv2}. In $\pt_2$, the third level still depicts life events but the first two levels have been interchanged. The first level now represents the states of a joint random variable \enquote{hospital admission} and \enquote{hospital admission having low access to credit}. The second level then depicts access to credit with states \enquote{high} and \enquote{moderately high}. So because both $\pt$ and $\pt_2$ represent the same model with $\pt$ showing access to credit before hospital admission and $\pt_2$ reversing that order, we cannot hypothesize a putative causal relationship on these (conditionally independent) variables: see~\cite{ours} for a more thorough presentation of this very subtle point.

It is less straightforward to assign a meaning in terms of problem variables to the staged trees in \cref{fig:chds_equiv1} and~\ref{fig:chds_equiv3}. However, we can still see when comparing $\pt_1$ with $\pt_2$ or $\pt$ with $\pt_3$ that only for children from a family with high access to credit is the order of hospital admission and life events reversible. In all other circumstances the model depicts hospital admission before life events. As in~\cite{cowelljim:causal,meins}, we therefore might want to assign this a putative causal interpretation. 

\section*{Acknowledgments} 
Christiane G\"orgen was supported by the EPSRC grant  EP/L505110/1. Part of this research was supported through the programme \enquote{Oberwolfach Leibniz Fellows} by the Mathematisches Forschungsinstitut Oberwolfach in 2017. During some of this development Jim Q.~Smith was supported by The Alan Turing Institute under the EPSRC grant EP/N510129/1.


\FloatBarrier
\appendix
\section*{Appendix} 

\subsection*{Square-free monomial ideals}
We summarize here the notions from commutative algebra which have been
mentioned in this paper.

Given a non-zero polynomial $f\in \mathbb R[x_1,\dots,x_d]$, with
\textbf{coefficients} in $\mathbb R$ and \textbf{indeterminates} (or
variables) $x_1,\dots,x_d$, $f$ is uniquely written as $f =
\sum_{i=1}^s \beta_i t_i$, with coefficients $b_i\ne0$, and
\textbf{power-products} (or terms, or monomials) $t_i =
x_1^{\alpha_{i,1}}\cdots x_d^{\alpha_{i,d}}$ all distinct, for every
$i = 1,\dots,s$.

The \textbf{support} of a polynomial $f$
is the set of the power-products actually occurring in $f$.
With the notation above, $\supp(f) = \{t_i\mid i = 1,\dots,s\}$.

An \textbf{ideal} generated by a set of polynomials,
say $I = \ideal{f_1, \dots, f_k}$,
is the set of all linear combinations with polynomial coefficients,
i.e. $I = \{ g_1f_1 +\dots + g_kf_k \mid g_i\in \mathbb
R[x_1,\dots,x_d] \text{ for } i=1,\ldots,k \}$.
In particular, if all $f_i$'s are power-products, $I$ is called a 
\textbf{monomial ideal}.
If a power-product has all exponents in  $\{0,1\}$, it is said 
\textbf{square-free}, and an ideal generated by square-free
power-products is called \textbf{square-free monomial ideal}.

Given a monomial ideal $I$, a \textbf{minimal prime}  of $I$ is
an ideal ${\cal P}$ generated by a subset of the indeterminates
$\{x_1,\dots,x_d\}$ such that $I$ is contained in ${\cal P}$, but is not
contained in any ideal generated by a subset of the generators of
${\cal P}$ (used in \cref{thm:minimal}).

An ideal is \textbf{primary} if $f g \in I$ implies either $f \in I$ or
 some power $g^m\in I$ (for some integer $m > 0$).
All ideals in $\mathbb R[x_1,\dots,x_d]$ admit a \textbf{primary
  decomposition}, i.e. may be written as an intersection of primary
ideals.  In the particular case of interest in this paper, a
square-free monomial ideal has primary decomposition $I={\cal
  P}_1\cap\dots{\cal P}_\ell$, where the primary ideals ${\cal P}_i$
are indeed the minimal primes of $I$.  In general, the \textbf{prime
  decomposition} of an ideal is given by the minimal primes of the
ideal (used in \cref{ex:non-square-free}), and is the primary
decomposition of the radical of the ideal.

In general, computing the primary decomposition of a polynomial ideal
is quite difficult, but for monomial ideals the operations are a lot
easier.  In particular, for square-free monomial ideals there is a
very simple and efficient algorithm called \textbf{Alexander Dual}.

\bibliographystyle{plain}
\small{
\setlength{\bibsep}{2pt plus 0.3ex}
\bibliography{christianesbib}}

\paragraph{Authors' addresses}~\\
Christiane G\"orgen, \texttt{goergen@mis.mpg.de}, Max-Planck-Institute for Mathematics in the Sciences, Leipzig, Germany.\\
Anna Bigatti,  \texttt{bigatti@dima.unige.it}, Dipartimento di Matematica, Universit\`a degli Studi di Genova, 16146 Genova, Italy.\\
Eva Riccomagno, \texttt{riccomagno@dima.unige.it}, Institute of Intelligent Systems for Automation, National Research Council, Italy; and Dipartimento di Matematica, Universit\`a degli Studi di Genova, 16146 Genova, Italy.\\
Jim Q.~Smith, \texttt{j.q.smith@warwick.ac.uk}, Department of Statistics, University of Warwick, Coventry CV5 7AL, U.K.; and The Alan Turing Institute, British Library, 96 Euston Road, NW1 2DB London, U.K..
\end{document}